\newtheorem{theorem}{Theorem}
\newtheorem{corollary}[theorem]{Corollary}
\newtheorem{definition}[theorem]{Definition}
\newtheorem{example}[theorem]{Example}
\newtheorem{lemma}[theorem]{Lemma}
\newtheorem{proposition}[theorem]{Proposition}
\newtheorem{remark}[theorem]{Remark}
\title{Duality Theorems in Ergodic Transport}
\author{Artur O. Lopes\thanks{arturoscar.lopes@gmail.com, Instituto de Matem\'atica - UFRGS - Partially supported by DynEurBraz, CNPq, PRONEX -- Sistemas
Dinamicos,  INCT, Convenio Brasil-Franca}\, and Jairo K. Mengue\thanks{jairokras@gmail.com,  Instituto de Matem\'atica - UFRGS} }
\begin{document}

\maketitle

\begin{abstract}

We analyze several problems of Optimal Transport Theory in the setting of Ergodic Theory.
In a certain  class of problems we consider questions in Ergodic Transport which are generalizations of the ones in Ergodic Optimization.

 Another class of problems is the following: suppose $\sigma$ is the shift  acting on Bernoulli space $X=\{0,1\}^\mathbb{N}$,  and, consider a fixed continuous cost function $c:X \times X\to \mathbb{R}$. Denote by  $\Pi$ the set of all Borel probabilities $\pi$ on $X\times X$, such that, both its
$x$ and $y$ marginal are $\sigma$-invariant probabilities. We are interested in the optimal plan $\pi$ which minimizes $\int\, c \,\,d \pi$ among the probabilities on $\Pi$.

We show, among other things, the analogous Kantorovich Duality Theorem. We also analyze uniqueness of the optimal plan under generic assumptions on $c$. We investigate the existence of a dual pair of Lipschitz functions which realizes the present dual Kantorovich problem under the assumption that the cost is Lipschitz continuous. For continuous costs $c$ the corresponding results in the Classical Transport Theory and in Ergodic Transport Theory can be, eventually, different.

We also consider the problem of approximating the optimal plan $\pi$ by convex combinations of plans such that the support projects in periodic orbits.


\end{abstract}



\section{Introduction}

For a compact metric space $X$, we denote $P(X)$ the set of probabilities acting on the Borel sigma-algebra $\mathcal{B}(X)$. $C(X)$ denotes the set of continuous functions on $X$ taking real values.

We denote by $\sigma$ the shift acting on $\{1,2,..,d\}^\mathbb{N}$ and  by $\hat{\sigma}$ the shift acting on $\{1,2,..,d\}^\mathbb{Z}$. Some of our results apply to more general cases where one can consider a continuous transformation defined on any compact metric space $X$. Anyway, the reader can take $\{1,2,..,d\}^\mathbb{Z}= X \times Y=\{1,2,..,d\}^\mathbb{N}\times \{1,2,..,d\}^\mathbb{N}$ as our favorite toy model.

We consider a continuous cost function $c:X\times Y \to \mathbb{R}$, where $X$ and $Y$ are compact metric spaces.

The Classical Transport Problem consider probabilities $\pi$ on $P(X\times Y)$ and the minimization of $\int \, c(x,y) d \pi(x,y)$ under the hypothesis that
the $y$-marginal of $\pi$ is a fixed probability $\nu$ and $x$-marginal of $\pi$ is a fixed probability $\mu$. A probability $\pi$ which minimizes such integral is called an optimal plan \cite{Vi1} \cite{Gi}.

We want to analyze a different class of problem where in some way the restriction to invariant probabilities \cite{Man} appears in some form. We present several different settings.

As a motivation one can ask: given the $2$-Wasserstein metric $W$ on the space of probabilities on $X=\{1,2,..,d\}^\mathbb{N} $, and a certain fixed probability $\mu$, which is not invariant for the shift $\sigma:\{1,2,..,d\}^\mathbb{N}\to \{1,2,..,d\}^\mathbb{N}$, characterize  the closest $\sigma$-invariant probability $\nu$ to $\mu$.  In other words, we can be interested in finding a $\sigma$-invariant probability $\nu$ which minimizes the value $W(\mu,\nu)$ for a fixed $\mu$. In this case we are taking $X=Y$. What can be said about optimal transport plans, duality, etc?

As a generalization of this problem one can consider a continuous cost $c(x,y)$, where $c:\{1,2,..,d\}^\mathbb{N}\times \{1,2,..,d\}^\mathbb{N}\to \mathbb{R}$, and ask about the properties of the plan $\pi$ on $\{1,2,..,d\}^\mathbb{N}\times \{1,2,..,d\}^\mathbb{N}$ which minimize $\int \, c(x,y) d \pi(x,y)$ under the hypothesis that
the $y$-marginal of $\pi$ is a variable $\sigma$-invariant probability $\nu$, and the $x$-marginal of $\pi$ is a fixed probability $\mu$.

We note that a plain with this marginals properties is characterized by:
\begin{equation}
\left\{ \begin{array}{l}
\int f(x) \, d\pi(x,y) = \int f(x) \, d\mu(x) \ \ \text{for any} \, f\in C(X) \\
\int g(y) \,d\pi(x,y) = \int g(\sigma(y)) \, d\pi(x,y) \ \ \text{for any} \, g\in C(Y)
\end{array}
\right.
\label{marginal}
\end{equation}

We will show in section \ref{one} the following:

\begin{theorem}[\textbf{Kantorovich duality}]\label{dualidade}
Consider a compact metric space $X$ and $Y=\{1,2,..,d\}^\mathbb{N}$. Consider a fixed $\mu \in P(X)$, and a fixed continuous cost function $c:X\times Y\rightarrow \mathbb{R}^+$. Define $\Pi(\mu,\sigma)$ as the set of all Borel probabilities $\pi \in P(X\times Y)$ satisfying (\ref{marginal}). Define $\Phi_c$ as the set of all pair of continuous functions $(\varphi,\psi) \in C(X)\times C(Y)$ which satisfy:
\begin{equation}
\label{desigualdade phic}
\varphi(x)+\psi(y)- (\psi \circ \sigma)(y)\leq c(x,y), \ \ \ \ \forall \, (x,y) \in X\times Y
\end{equation}

Then,

I)
\begin{equation}
\label{resultado dualidade}
\inf_{\Pi(\mu,\sigma)} \int c \, d\pi = \sup_{(\varphi,\psi)\in\Phi_c} \int \varphi \, d\mu.
\end{equation}
Moreover, the infimum in the left hand side is attained.

II) If $c$ is a Lipschitz continuous function, then, there exist Lipschitz $\varphi$ and $\psi$ which are admissible realizers of the supremum.

\end{theorem}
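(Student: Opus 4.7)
My plan is to follow Villani's approach to classical Kantorovich duality. For Part I I will prove the easy inequality directly and the hard one via Fenchel--Rockafellar on $E = C(X\times Y)$, and for Part II I will regularize a maximizing sequence by combining one $c$-transform on the $\varphi$-side with a Lipschitz-sub-action construction on the $\psi$-side imported from ergodic optimization.

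For the easy direction of Part I, integrate (\ref{desigualdade phic}) against $\pi \in \Pi(\mu,\sigma)$: the $x$-marginal condition in (\ref{marginal}) gives $\int \varphi(x)\, d\pi = \int \varphi\, d\mu$, while $\sigma$-invariance of the $y$-marginal gives $\int[\psi(y) - \psi(\sigma y)]\, d\pi = 0$, so $\int \varphi\, d\mu \leq \int c\, d\pi$. Attainment of the infimum follows from weak-$*$ compactness, since $\Pi(\mu,\sigma)$ is non-empty ($\mu \otimes \nu$ for any $\sigma$-invariant $\nu$) and weak-$*$ closed in $P(X\times Y)$, and $\pi \mapsto \int c\, d\pi$ is continuous. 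For the hard direction, apply Fenchel--Rockafellar to
\[
\Theta(u) = \begin{cases} 0 & u \geq -c,\\ +\infty & \text{else},\end{cases} \qquad \Xi(u) = \begin{cases} \int \varphi\, d\mu & u = \varphi + \psi - \psi\circ\sigma,\\ +\infty & \text{else}.\end{cases}
\]
That $\Xi$ is well-defined follows because $\varphi + \psi - \psi\circ\sigma = 0$ forces $\varphi \equiv k$ and $\psi\circ\sigma - \psi \equiv k$, and summing $k$ along any periodic orbit yields $k = 0$, hence $\varphi \equiv 0$. Both functionals are convex and proper, $\Theta$ is lower semicontinuous, and a large constant $u_0 = M$ (with $\varphi = M, \psi = 0$) lies in both domains with $\Theta$ continuous there.

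Fenchel--Rockafellar then yields $\inf(\Theta + \Xi) = \max_{\pi \in E^*}[-\Theta^*(-\pi) - \Xi^*(\pi)]$. Computation (analogous to Villani's classical case) gives $\Theta^*(-\pi) = \int c\, d\pi$ if $\pi \geq 0$ and $+\infty$ otherwise, and $\Xi^*(\pi) = 0$ exactly when $\pi_x = \mu$ and $\pi_y$ is $\sigma$-invariant (else $+\infty$). After the sign change $(\varphi,\psi) \leftrightarrow (-\varphi,-\psi)$, this identity reads $\sup_{\Phi_c}\int\varphi\, d\mu = \inf_{\Pi(\mu,\sigma)}\int c\, d\pi$, which is (\ref{resultado dualidade}).

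For Part II, start from a maximizing sequence $(\varphi_n,\psi_n) \in \Phi_c$ and regularize in two stages. Replace $\varphi_n$ by its $c$-transform $\bar\varphi_n(x) := \inf_y[c(x,y) - \psi_n(y) + \psi_n(\sigma y)]$: then $(\bar\varphi_n, \psi_n) \in \Phi_c$, $\bar\varphi_n \geq \varphi_n$, and $\bar\varphi_n$ inherits the Lipschitz constant of $c$ in $x$ uniformly in $y$. Next, let $h_n(y) := \inf_x[c(x,y) - \bar\varphi_n(x)]$, which is Lipschitz in $y$; the inequality $\psi - \psi\circ\sigma \leq h_n$ is solved by $\psi_n$, hence $\int h_n\, d\nu \geq 0$ for every $\sigma$-invariant $\nu$. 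By the existence of Lipschitz sub-actions for Lipschitz potentials on the shift (Bousch, Contreras--Lopes--Thieullen), I replace $\psi_n$ by a Lipschitz $\tilde\psi_n$ with Lipschitz constant controlled by $\mathrm{Lip}(c)$, preserving $(\bar\varphi_n, \tilde\psi_n) \in \Phi_c$. Normalizing $\tilde\psi_n$ at a basepoint and applying Arzelà--Ascoli gives a subsequential uniform limit pair of Lipschitz functions; continuity of $\varphi \mapsto \int\varphi\, d\mu$ shows it realizes the supremum. The main obstacle is exactly this $\psi$-regularization: a single $c$-transform cannot regularize $\psi$ because admissibility couples $\psi(y)$ with $\psi(\sigma y)$, so Lipschitz control of $\psi$ must come from the non-trivial ergodic-optimization theorem on Lipschitz sub-actions, and one must track uniform Lipschitz bounds along the maximizing sequence for the compactness argument to close.
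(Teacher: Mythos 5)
Your proposal is correct and follows essentially the same route as the paper: Fenchel--Rockafellar duality for Part I and Lipschitz regularization of a maximizing sequence followed by Arzel\`a--Ascoli for Part II. The only differences are cosmetic --- you work on $E=C(X\times Y)$ rather than the paper's $C(X\times Y)\times M(Y)$ (whose extra factor is in any case killed by taking $g=0$), you obtain attainment of the infimum by direct weak-$*$ compactness of $\Pi(\mu,\sigma)$ rather than from the attainment clause of Fenchel--Rockafellar, and in Part II you cite the Lipschitz sub-action theorem as a black box where the paper proves the needed instance inline by setting $\overline{\psi}(y)=\inf\bigl\{\sum_{i=0}^{n-1}c(x_i,\sigma^i w)-\varphi(x_i): n\geq 0,\ \sigma^n w=y,\ x_i\in X\bigr\}$.
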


Any pair $\varphi$ and $\psi$ satisfying (\ref{desigualdade phic}) is called admissible.
Any $\pi$ realizing the infimum in (\ref{resultado dualidade}) will be called an optimal plan, and its $y$-projection an optimal invariant probability solution for $c$ and $\mu$. Moreover, $\varphi$ and $\psi$ are called  an optimal dual pair if they realize the maximum of the right hand side expression. It is possible that does not exists an optimal dual pair (see remark bellow).

The following criteria is quite useful.

{\bf Slackness condition} \cite{Vi1} \cite{Vi2}: suppose for all $(x,y)$ in the support of  $\pi \in \Pi(\mu,\sigma)$ we have that
$$ \varphi(x)+\psi(y)- (\psi \circ \sigma)(y)= c(x,y),$$
for some admissible $\varphi$ and $\psi$ satisfying $\varphi+\psi- (\psi \circ \sigma)\leq c$, then
$\pi$ is an optimal plan and $\varphi$ and $\psi$ is  an optimal dual pair.

In recent years several results in the so called Ergodic Optimization Theory were obtained \cite{Jenkinson} \cite{CLT} \cite{Le}  \cite{Bousch1} \cite{Mo} \cite{Bousch-walters} \cite{GT1} \cite{BG}. We will show that the above kind of Ergodic Transport problem contains as a particular case this other theory. The subaction which possess properties of minimality described in \cite{CLT} and \cite{GL2} can be seen as a version of Kantorovich duality.

In the below remark we show that there are conceptual differences in the kind of analogous results we can get in the Classical and in the Ergodic Transport Theory.

\begin{remark} \label{dif}
We point out that if $\mu$ is a Dirac delta in a point $x_0$, then the cost $c(x_0,y)$ just depends on $y$.  In this way if we denote $A(y)=c(x_0,y)$ we get
that the above problem is the classical one in Ergodic Optimization, where one is interested in minimizing $\int  A \, d \nu$ among invariant probabilities $\nu$. There is no big difference in this theory if one consider maximization instead of minimization. The function $\psi$ above corresponds to the concept of subaction and the number $\varphi(x_0)$ is equal to $\min\{\int A(y) \, d\nu(y): \nu \, \text{is invariant}\}$ \cite{CLT}, \cite {Conze-Guivarch}, \cite{Bousch-walters}, \cite{Jenkinson}. It is known that for a $C^0$-generic continuous potential $A$  does not exist a continuous subaction \cite{BJ}. For the Classical Transport problem in compact spaces there exists continuous realizers for the dual problem when $c$ is continuous \cite{Vi2}. This shows that there are non trivial differences (at least for a $C^0$ cost function $c$) between the Classical and the Ergodic transport setting. It is known the existence of a calibrated Holder subaction for a Holder potential $A$. The item II on the above theorem is the correspondent result on the present setting. The expression $\varphi(x) +\psi(\sigma(y))-\psi(y) \leq c(x,y)$ can be for some people  more natural. This can be also obtained by replacing $\psi$ by $-\psi$.
\end{remark}

The next example shows that the Ergodic Transport problem can not be derived in an easy way from Ergodic Optimization properties.

We denote by $(a_{1}...a_{n})^\infty$ the periodic point $(a_{1}...a_{n}a_{1}...a_{n}a_{1}...)$ in $\{0,1\}^{\mathbb{N}}$.

\begin{example} Consider $X=\{x_{0},x_{1}\}$, $\mu = \frac{1}{2}(\delta_{x_{0}}+\delta_{x_{1}})$, $Y=\{0,1\}^{\mathbb{N}} $, and a cost function $c$ defined on $X\times Y$, satisfying the following proprieties: \newline
1) $c(x_{0},(01)^{\infty})=0$, $c(x_{0},(10)^{\infty})=1$, $c(x_{0},0^{\infty})=1/4$, $c(x_{0},y)>0$, if $y\neq (01)^{\infty}$. \newline
2) $c(x_{1},(01)^{\infty})=1$, $c(x_{1},(10)^{\infty})=0$, $c(x_{1},1^{\infty})=1/4$, $c(x_{1},y)>0$, if $y\neq (10)^{\infty}$. \newline
Assume $c$ is Lipschitz continuous.\newline
Note that, as an  example,  we can take $$c(x_{0},y)=d^{2}(y,(01)^{\infty}), \, c(x_{1},y)=d^{2}(y,(10)^{\infty}).$$

We observe that the measure $\nu=\frac{1}{2}(\delta_{(01)^{\infty}}+\delta_{(10)^{\infty}})$  is not a minimizing measure for either of  the potentials    $A_{0}(y):= c(x_0,y)$, or,  $A_{1}(y):= c(x_1,y)$. By the other hand, the unique optimal plan is given by $$\pi = \frac{1}{2}(\delta_{(x_{0},(01)^{\infty})} + \delta_{(x_1,(10)^{\infty})}),$$ which projects on $\mu$ and $\nu$.
\end{example}

We will also show in section \ref{uni} that generically on $c$ the optimal plan is unique.

In another kind of problem one can ask: given a continuous cost $c(x,y)$, $c:\{1,2,..,d\}^\mathbb{N}\times \{1,2,..,d\}^\mathbb{N}\to \mathbb{R}$, what are the properties of the probability $\pi$ on $\{1,2,..,d\}^\mathbb{N}\times \{1,2,..,d\}^\mathbb{N}$ which minimize $\int \, c(x,y) d \pi(x,y)$ under the hypothesis that
the $y$-marginal of $\pi$ is a variable invariant probability $\nu$ and the $x$-marginal of $\pi$ is a variable invariant probability $\mu$? Under what assumptions on $c$ we get that the optimal plan   $\pi$ is invariant for $\hat{\sigma}?$

We will present now formal definitions of the second class of problems.

Here we fix compact metric spaces $X$ and $Y$ and continuous transformations
\[T_{1}:X \times Y \to X, \ \ \ \ T_{2}: X \times Y \to Y,\]
such that, $T:X \times Y \to X\times Y$, given by $T=(T_{1},T_{2})$, defines a transformation of $X\times Y$ to itself.
Let $\Pi(T)$ the set of Borel probability measures $\pi$ in $X\times Y$, such that, for any $f:X\to \mathbb{R}$, $g:Y\to\mathbb{R}$:
$$\int f(x) \, d\pi(x,y) = \int f(T_{1}(x,y))\, d\pi(x,y), $$
and
$$ \int g(y) \, d\pi(x,y) = \int g(T_{2}(x,y))\, d\pi(x,y).$$

The set of such $\pi$ is called the set of admissible plans.

Note that any $T$-invariant measure in $X\times Y$ (which exists because $X\times Y$ is compact) satisfies this condition. Indeed, if $\nu$ is $T$-invariant, then:
\[ \int f(x) \, d\nu(x,y) = \int f(x,y) \, d\nu(x,y)=\]
\[=\int f(T(x,y)) \, d\nu(x,y)=\int f((T_{1}(x,y),T_{2}(x,y))) \, d\nu(x,y) = $$
$$\int f(T_{1}(x,y)) \, d\nu(x,y).\]

A similar reasoning can be applied to $g$.

Given a continuous function $c:X\times Y \to [0,+\infty)$, what can be said about
\[ \alpha(c) :=\inf \{\int c \, d\pi : \pi \in \Pi(T)\} ?\]

What are the properties of optimal plans? We are interested here in Kantorovich Duality type of results.

We will show the following:

\begin{theorem}  [\textbf{Kantorovich duality}] \label{duas}
$\alpha(c)$ is the supremum of the numbers $\alpha$ such that there exists continuous functions $\varphi:X\to\mathbb{R}$, $\psi:Y\to \mathbb{R}$ satisfying:
\[\alpha +\varphi(x)-\varphi(T_{1}(x,y)) + \psi(y)-\psi(T_{2}(x,y)) \leq c(x,y), \ \ \forall (x,y)\in X \times Y.\]
\end{theorem}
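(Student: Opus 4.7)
The plan is to prove both inequalities in the claim. Write $\alpha^{*}$ for the supremum on the right-hand side. The easy inequality $\alpha^{*}\leq \alpha(c)$ is obtained by integration: for any admissible triple $(\alpha,\varphi,\psi)$ and any $\pi\in\Pi(T)$, integrating
\[ \alpha + \varphi(x) - \varphi(T_{1}(x,y)) + \psi(y) - \psi(T_{2}(x,y)) \leq c(x,y) \]
against $\pi$ makes the four middle terms cancel, via the two defining identities of $\Pi(T)$, and leaves $\alpha\leq \int c\,d\pi$. Taking the supremum in $\alpha$ and the infimum in $\pi$ yields $\alpha^{*}\leq \alpha(c)$. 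The infimum defining $\alpha(c)$ is itself attained, since $\Pi(T)$ is weak-$*$ closed in $P(X\times Y)$ and hence weak-$*$ compact.

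For the reverse inequality I would argue by contradiction using the geometric Hahn--Banach theorem in $E:=C(X\times Y)$ with the sup norm. Suppose $\alpha^{*}<\beta<\alpha(c)$, and consider the affine set
\[ A := \bigl\{\, \beta - c(x,y) + \varphi(x) - \varphi(T_{1}(x,y)) + \psi(y) - \psi(T_{2}(x,y)) \,:\, \varphi\in C(X),\ \psi\in C(Y)\,\bigr\} \]
together with the open convex cone $U:=\{h\in E:h<0 \text{ pointwise}\}$. Because $\beta>\alpha^{*}$ is not admissible, no element of $A$ lies in $\overline{U}=\{h\leq 0\}$; in particular $A\cap U=\emptyset$. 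The geometric Hahn--Banach theorem supplies a nonzero continuous linear functional separating $A$ from $U$; by Riesz representation this is $h\mapsto \int h\,d\mu$ for some finite signed Borel measure $\mu$ on $X\times Y$, with a constant $c_{0}$ such that $\int a\,d\mu\geq c_{0}\geq \int u\,d\mu$ for every $a\in A$ and $u\in\overline{U}$.

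The next step is to read off the structure of $\mu$ from this separation. Testing against $u\equiv 0$ gives $c_{0}\geq 0$; testing against $\lambda v$ with $v\leq 0$ continuous and letting $\lambda\to+\infty$ forces $\int v\,d\mu\leq 0$ for every such $v$, so $\mu\geq 0$. Since the functional is nonzero, $\mu\neq 0$, and thus $\bar\mu:=\mu/\mu(X\times Y)$ is a Borel probability. The lower bound $\int a\,d\mu\geq c_{0}$ must survive the rescaling $(\varphi,\psi)\mapsto t(\varphi,\psi)$ for every $t\in\mathbb{R}$; linearity in $t$ then forces
\[ \int [\varphi(x)-\varphi(T_{1}(x,y))]\,d\mu = 0 \quad\text{and}\quad \int [\psi(y)-\psi(T_{2}(x,y))]\,d\mu = 0 \]
for all $\varphi\in C(X)$, $\psi\in C(Y)$; that is, $\bar\mu\in\Pi(T)$. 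Taking $\varphi=\psi=0$ in $A$ finally gives $\beta\,\mu(X\times Y) - \int c\,d\mu \geq c_{0}\geq 0$, so $\int c\,d\bar\mu\leq \beta<\alpha(c)$, contradicting the definition of $\alpha(c)$ as an infimum over $\Pi(T)$.

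The main obstacle, as in all Kantorovich-type dualities, is the Hahn--Banach step together with the subsequent extraction of both positivity and the two invariance identities for the separating measure. The correct choice of the open cone $U$ (rather than its closure) is what makes the separation available, while the affine structure of $A$, combined with scale invariance in $(\varphi,\psi)$, is what forces the two invariance conditions defining $\Pi(T)$; the remainder of the argument is routine functional-analytic bookkeeping.
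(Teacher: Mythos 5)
Your proof is correct, and it takes a genuinely different route from the paper. The paper derives Theorem \ref{duas} from the Fenchel--Rockafellar duality theorem: it works on the product space $E=C(X\times Y)\times M(X\times Y)$, introduces the two convex functionals $\Theta$ and $\Xi$ (with the auxiliary constraint $\|\nu\|\leq 2$), computes their Legendre--Fenchel transforms, and identifies $M^{+}(X\times Y)\cap\Pi^{*}(T)$ with $\Pi(T)$. You instead inline the Hahn--Banach separation that underlies Fenchel--Rockafellar: you separate the affine set $A=\{\beta-c+\varphi-\varphi\circ T_{1}+\psi-\psi\circ T_{2}\}$ from the open cone $U=\{h<0\}$, and then extract positivity of the separating measure from $\overline{U}$ and the two invariance identities from the scale invariance of the linear part of $A$ (taking $\psi=0$ and varying $\varphi$, and vice versa, to get each identity separately). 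All the steps check out: $A\cap\overline{U}=\emptyset$ precisely because $\beta>\alpha^{*}$, $U$ is open in the sup norm on a compact space, and normalizing $\mu$ produces an element of $\Pi(T)$ with $\int c\,d\bar\mu\leq\beta<\alpha(c)$, a contradiction. Your approach buys a more self-contained and transparent argument that avoids the bookkeeping with $M(X\times Y)$, the $\|\nu\|\leq2$ device, and the Legendre transforms, and it does not actually use $c\geq 0$ (the paper needs it when computing $\Theta^{*}$). What the paper's route buys is the ``attainment'' clause of Fenchel--Rockafellar, which in the companion Theorem \ref{dualidade} is used to produce an optimal plan; you recover attainment of the infimum more simply from weak-$*$ compactness of $\Pi(T)$, which is all that is needed here.
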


We can list different interesting cases where we can apply the above result:

\bigskip

1) If $T_{1}$ doesn't depends of $y\in Y$ and $T_{2}$ doesn't depends of $x\in X$, then we have the expression:
 \[\alpha +\varphi(x)-\varphi(T_{1}(x)) + \psi(y)-\psi(T_{2}(y)) \leq c(x,y) \ \ \forall (x,y)\in X \times Y.\]

In this case we are considering two variable invariant probabilities (one for $T_1$ and the other for $T_2$) as marginals of an admissible plan.


\bigskip

2) If $X$ and $Y$ are the Bernoulli space $\{1,2.,,d\}^\mathbb{N}$, $T_{1}=\sigma$ is the shift acting on the variable $x$, (doesn't depend of $y\in Y$) and $T_{2}=\tau_{x}(y)$ (where $\tau_j$, $j=1,2,...,d$, are the inverse branches of $\sigma$ acting on the variable $y$) we have that $T=\hat{\sigma}$ is the shift on $\{1,2.,,d\}^\mathbb{Z}$ and the above expression can be written as:
\[\alpha +\varphi(x)-\varphi(\sigma(x)) + \psi(y)-\psi(\tau_{x}(y)) \leq c(x,y) \ \ \forall (x,y)\in X \times Y.\]
In this case invariant probabilities $\pi$ for the shift $\hat{\sigma}:\{1,2.,,d\}^\mathbb{Z}\to \{1,2.,,d\}^\mathbb{Z}$ are admissible plans.
But not all admissible plan is $\hat{\sigma}$ invariant.

It is necessary to assume some special properties on $c$ in order to get an optimal plan which is $\hat{\sigma}$-invariant. This is the purpose of the next example.

\begin{example} Consider the points $x_{0}=(01)^{\infty}, x_{1}=(10)^{\infty}, \, y_{0}=(001)^{\infty}, y_{1}=(010)^{\infty}, y_{2}=(100)^{\infty}$. Let $c$ a Lipschitz continuous cost satisfying: \newline
1) $c(x_{0},y_{0}) = c(x_{1},y_{1})=c(x_{0},y_{2}) = c(x_{1},y_{2})=0$, \newline
2) $c>0$ on the other points.

\bigskip

It's easy to see that the unique optimal plan is given by:
$$\pi = \frac{1}{3}\delta_{(x_{0},y_{0})} + \frac{1}{3}\delta_{(x_{1},y_{1})} + \frac{1}{6}\delta_{(x_{0},y_{2})} + \frac{1}{6}\delta_{(x_{1},y_{2})}.$$
The support of this plan does not contain a $\hat{\sigma}$-invariant probability.
\end{example}

Which is the right assumption to get an optimal plan $\pi$  which is $\hat{\sigma}$-invariant? In
\cite{CLO}, \cite{GL4}, \cite{LOT}, \cite{LOS}, \cite{LO} some results in this direction are presented considering a cost which is dynamically defined.

\bigskip

Another class of examples:

\bigskip

3) If $X$ and $Y$ are the Bernoulli space $\{1,2.,,d\}^\mathbb{N}$,  and, for all $(x,y)\in X \times Y$, we have $T_{1}(x,y)=x$,  and $T_{2}(x,y)=\tau_{x_0}(y)$, $x=(x_0,x_1,...)\in \{1,2.,,d\}^\mathbb{N}$, (where $\tau_j$, $j=1,2,...,d$, are the inverse branches of $\sigma$), then, there is no $\varphi(x)$ in this case, and, the above expression can be written as:
\[\alpha +\psi(y)-\psi(\tau_{x}(y)) \leq c(x,y) \ \ \forall (x,y)\in X \times Y.\]
This is the holonomic setting of \cite{GL2}. A duality result is proved in section 2 in \cite{GL2}. Note that  the $y$-marginal of a holonomic probability is invariant for the shift $\sigma$ acting on the variable $y$ (see section 1 in \cite{GL2}). In the case $c$ is Holder it is shown the existence of Holder realizers $\psi$ in sections 3 and 4 \cite{GL2}.

\bigskip

We will show in sections \ref{zeta1} and \ref{zeta2} here that the optimal plans can be approximated by convex combination of optimal plans (of the classical transport problem) associated to measures supported in periodic orbits. In this way one can get an approximation scheme to the  optimal plan based on a finite set of conditions. Our approach here is inspired in  the point of view of taking the temperature going to zero for Gibbs states  at positive temperature in order to get results in  Ergodic Optimization \cite{LM1}.
The problem of fast approximation of maximizing probabilities by measures on periodic orbits plays an important role in Ergodic Optimization \cite{HY} \cite{BQ} \cite{CM}.

A paper which  consider Ergodic Transport problems under a continuous time setting is \cite{K1}.

We would like to thanks N. Gigli for very helpful comments and advice during the preparation of this manuscript.

\section{The case of one fixed probability and another variable invariant one} \label{one}

Here we will prove  Theorem \ref{dualidade}. We will adapt the main reasoning described in \cite{Vi1}.

Given a normed Banach space $E$ we denote by $E^{*}$ the dual space containing the bounded linear functionals from $E$ to $\mathbb{R}$.

We will need the following classical result \cite{Vi1} \cite{Vi2}.

\begin{theorem}[\textbf{Fenchel-Rockafellar duality}]
\label{Fenchel}
Suppose $E$ is  a vector normed space, $E^{\ast}$ its topological dual, $\Theta$ and $\Xi$ two convex functions defined on $E$ taking values in $\mathbb{R}\cup \{+\infty\}$. Denote $\Theta^{\ast}$ and  $\Xi^{\ast}$, respectively, the Legendre-Fenchel transform of  $\Theta$ and $\Xi$.
Suppose there exists  $x_0\in E$, such that $x_0 \in D(\Theta)\cap D(\Xi)$, and, that $\Theta$ is continuous on $x_0$.

Then,
\begin{equation}
\inf_{x \in E}[\Theta(x)+\Xi(x)]=\sup_{f\in E^{*}}[-\Theta^{*}(-f)-\Xi^{*}(f)] \label{rockafeller}
\end{equation}
Moreover, the supremum in ($\ref{rockafeller}$) is attained in at least one element in $E^*$.
\end{theorem}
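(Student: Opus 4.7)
The plan is to split the argument into two pieces: weak duality, which is immediate from the definition of the Legendre--Fenchel transforms, and strong duality with attainment, obtained by a geometric Hahn--Banach separation in the product space $E\times\mathbb{R}$.

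\textbf{Weak duality.} For every $x\in E$ and $f\in E^{\ast}$, the defining inequalities $\Theta^{\ast}(-f)\geq\langle -f,x\rangle-\Theta(x)$ and $\Xi^{\ast}(f)\geq\langle f,x\rangle-\Xi(x)$ add to give $\Theta(x)+\Xi(x)\geq -\Theta^{\ast}(-f)-\Xi^{\ast}(f)$, so taking $\inf$ on the left and $\sup$ on the right establishes the $\geq$ direction in (\ref{rockafeller}). Write $m$ for the left hand side; the hypothesis $x_{0}\in D(\Theta)\cap D(\Xi)$ forces $m\leq\Theta(x_{0})+\Xi(x_{0})<+\infty$, and if $m=-\infty$ weak duality already closes the proof, so I assume $m\in\mathbb{R}$.

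\textbf{Separation and extraction.} In $E\times\mathbb{R}$ I would consider the convex sets
\[A=\{(x,t):\Theta(x)<t\},\qquad B=\{(y,s):s\leq m-\Xi(y)\},\]
which are disjoint by the definition of $m$. Continuity of $\Theta$ at $x_{0}$ furnishes a neighborhood $U\ni x_{0}$ and a constant $M$ with $\Theta|_{U}\leq M$, so $U\times(M,+\infty)\subset A$ and in particular $A$ has nonempty interior. Geometric Hahn--Banach then produces $(f,\alpha)\in (E^{\ast}\times\mathbb{R})\setminus\{(0,0)\}$ and $\gamma\in\mathbb{R}$ with $\langle f,x\rangle+\alpha t\leq\gamma\leq\langle f,y\rangle+\alpha s$ for every $(x,t)\in A$ and $(y,s)\in B$. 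Sending $t\to+\infty$ in $A$ forces $\alpha\leq 0$; the case $\alpha=0$ is excluded because it would force $\langle f,\cdot\rangle\leq\gamma$ on $U$, hence $f=0$, and then $\gamma=0$ by evaluating both inequalities at $x_{0}\in D(\Xi)$, contradicting $(f,\alpha)\neq(0,0)$. Rescale to $\alpha=-1$. Substituting $(x,\Theta(x)+\varepsilon)$ for $x\in D(\Theta)$ and letting $\varepsilon\downarrow 0$ yields $\Theta^{\ast}(f)\leq\gamma$, and substituting $(y,m-\Xi(y))$ for $y\in D(\Xi)$ yields $\Xi^{\ast}(-f)\leq -m-\gamma$. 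Setting $g:=-f$ and adding, I obtain $-\Theta^{\ast}(-g)-\Xi^{\ast}(g)\geq m$, which together with weak duality gives both the equality in (\ref{rockafeller}) and the fact that $g$ realizes the supremum.

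\textbf{Main obstacle.} The decisive step is ruling out a vertical separating hyperplane ($\alpha=0$); this is exactly where local continuity of $\Theta$ at a point of $D(\Xi)$ is indispensable, because otherwise the Hahn--Banach hyperplane could be parallel to the $\mathbb{R}$-axis and fail to produce a usable element of $E^{\ast}$. The rest is routine bookkeeping in convex duality, provided one keeps careful track of the sign convention relating the separating functional $f$ to the argument $g=-f$ appearing in the supremum on the right side of (\ref{rockafeller}).
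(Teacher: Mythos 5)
The paper never proves Theorem \ref{Fenchel}: it is quoted as a classical result and attributed to \cite{Vi1}, \cite{Vi2}, so there is no internal proof to compare against. Your argument is the standard proof of that classical theorem (essentially the one in the cited references): weak duality straight from the definition of the Legendre--Fenchel transforms, then Hahn--Banach separation in $E\times\mathbb{R}$ of the strict epigraph of $\Theta$ from the hypograph of $m-\Xi$, with continuity of $\Theta$ at $x_0$ providing the interior point, and the extracted functional realizing the supremum. The skeleton and the bookkeeping (rescaling $\alpha=-1$, passing to $\Theta^*(f)\le\gamma$ and $\Xi^*(-f)\le -m-\gamma$, setting $g=-f$) are correct, as is the dispatch of the degenerate case $m=-\infty$ by weak duality.

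One step is misstated, though the ingredients to repair it are already in your text: in excluding a vertical hyperplane you claim that $\alpha=0$ would give $\langle f,\cdot\rangle\le\gamma$ on $U$, ``hence $f=0$.'' That implication is false on its own: a nonzero bounded functional is bounded above on any ball (e.g.\ $f(x)=x$ on an interval in $\mathbb{R}$), so boundedness above on the open set $U$ does not kill $f$. The correct argument must use both sides of the separation together with $x_0\in U\cap D(\Xi)$: with $\alpha=0$ one has $\langle f,x\rangle\le\gamma$ for all $x\in D(\Theta)\supset U$ and $\langle f,y\rangle\ge\gamma$ for all $y\in D(\Xi)$; evaluating at $x_0$, which lies in both sets, forces $\langle f,x_0\rangle=\gamma$, and then $\langle f,u\rangle\le 0$ for all $u$ in a ball centered at the origin, whence $f=0$, contradicting $(f,\alpha)\neq(0,0)$. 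So the evaluation at $x_0\in D(\Xi)$ is what yields $f=0$; it is not merely a device to get $\gamma=0$ afterwards. With that reordering the proof is complete and coincides with the classical argument the paper relies on.
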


\begin{proof}[\textbf{Proof of Theorem \ref{dualidade}}]

First we prove I).

We want to use  Fenchel-Rockafellar duality in the proof.

Define
$$E=C(X\times Y) \times M (Y)$$
where $C(X\times Y)$ is the set of all continuous functions in $X\times Y$ taking values in $\mathbb{R}$, with the usual sup norm $\parallel .\parallel_\infty$. Moreover, $M(Y)$ is the set of bounded linear operators in  $C(Y)$ taking values in $\mathbb{R}$ with the total variation norm.
Let $P_{\sigma}(Y)$ be the set of invariant probabilities  on $Y$.

Define $\Theta: E\longrightarrow \mathbb{R}\cup\{+\infty \}$ by
$$\Theta(u,\nu)=\left\{\begin{array}{ll}
0, \ \ \mbox{if} \ u(x,y)\geq -c(x,y), \ \forall(x,y)\in X\times Y, \mbox{and} \, \left\|\nu\right\| \leq 2\\
\\
+\infty, \ \ \mbox{in the other case},
\end{array}\right.
$$
Note that $\Theta$ is convex.

\bigskip

Define $\Xi:E\longrightarrow \mathbb{R}\cup\{+\infty \}$ by

$$
\Xi(u,\nu)=\left\{\begin{array}{ll}
\int_X\varphi d\mu, \ \ \mbox{if} \ u(x,y)=\varphi(x)+\psi(y)-(\psi \circ \sigma)(y), \\
\mbox{with} \ (\varphi,\psi)\in C(X)\times C(Y), \mbox{and} \, \nu \in P_{\sigma}(Y)\\
\\
+\infty, \ \ \mbox{in the other case}.
\end{array}\right.
$$

Note that  $\Xi$ is well defined. Indeed, if  $u= \varphi_{1}(x)+\psi_{1}(y)-(\psi_{1} \circ \sigma)(y) =\varphi_{2}(x)+\psi_{2}(y)-(\psi_{2} \circ \sigma)(y) $,
then, integrating under any invariant probability $\nu \in P_{\sigma}(Y)$, we have that
$\varphi_{1}(x)=\varphi_{2}(x)$.  Also note that $\Xi$ is convex. \newline
Observe that if $\nu \in P_{\sigma}(Y)$, then $(1,\nu)\in D(\Theta)\cap D(\Xi)$ and $\Theta$ is continuous in $(1,\nu)$.

Observe that
$$\inf_{(u,\nu)\in E}[\Theta(u,\nu)+\Xi(u,\nu)]$$
$$
=\inf\{\int_X\varphi d\mu :  \varphi(x)+[\psi -(\psi \circ \sigma)](y)\geq -c(x,y),
(\varphi,\psi)\in C(X)\times C(Y)\}$$
$$
=\inf\{-\int_X\varphi d\mu ;\,\, \,\varphi(x)+[\psi -(\psi \circ \sigma)](y)\leq c(x,y),
(\varphi,\psi)\in C(X)\times C(Y)\}$$
$$
=-\,\sup\{\int_X\varphi d\mu ;\,\, \,\varphi(x)+[\psi -(\psi \circ \sigma)](y)\leq c(x,y),
(\varphi,\psi)\in C(X)\times C(Y)\}$$
$$ =-\sup_{(\varphi,\psi)\in\Phi_c} \int \varphi \, d\mu.$$

Now we will compute the Legendre-Fenchel transform of $\Theta$ and $\Xi$, initially, for any $(\pi,g)\in E^{*}$: by the definition of $\Theta$
we get
\begin{eqnarray*}
\Theta^*((-\pi,-g)) &=&\sup_{(u,\nu)\in E}\left\{<(-\pi,-g),(u,\nu)>- \Theta(u,\nu) \right\} \\
&=& \sup_{(u,\nu)\in E}\left\{-\pi( u(x,y))- g(\nu): \ -u(x,y)\leq c(x,y), \left\| \nu \right\| \leq 2 \, \right \} \\
&=& \sup_{(u,\nu)\in E}\left\{\pi(u(x,y))- g(\nu) : \ u(x,y)\leq c(x,y) ,\left\| \nu \right\| \leq 2\right\}.\\
\end{eqnarray*}

Following \cite{Vi1} we note that if $\pi$ is not a positive functional, then, there exists a function $v \leq 0$, $v\in C(X\times Y)$, such that, $\pi(v)>0$, therefore, taking $u=\lambda v$ (remember that $c\geq 0$), and considering  $\lambda\rightarrow +\infty$, we get that $$\sup_{u\in E}\left\{\pi(u): \, u(x,y)\leq c(x,y) \right\}=+\infty.$$
When $\pi\in M^+(X\times Y)$ and $c\in C(X\times Y)$ we have that the supremum of $\pi(u)$ is, evidently, $\pi(c)$.

Therefore,
\begin{eqnarray}
\label{tetaestrela}
\Theta^*((-\pi,-g))=\left\{\begin{array}{ll}
\pi(c) + \displaystyle{\sup_{\left\|\nu\right\|\leq 2 } -g(\nu)},\ \mbox{if} \ \pi\in M^+(X\times Y) \\
\\
+\infty, \ \mbox{in the other case}.
\end{array}\right.
\end{eqnarray}

Analogously, by the definition of $\Xi$ we get that
$$\begin{array}{l}
\Xi^*(\pi,g)=\displaystyle{\sup_{(u,\nu)\in E}}\left\{<(\pi,g),(u,\nu)> - \Xi(u,\nu)\right\} \\ \\
=\displaystyle{\sup_{(u,\nu)\in E}}\left\{\begin{array}{lll} \,
\pi(u(x,y))-\int \varphi d\mu + g(\nu): \\
u(x,y)=\varphi(x)+\psi(y)- \psi (\sigma (y)) \ \mbox{where} \ (\varphi,\psi)\in C(x)\times C(Y)\\
\mbox{and}\, \nu \in P_{\sigma}(Y)
\end{array}\right\} \\ \\
=\displaystyle{\sup_{(\varphi,\psi)\in C(X)\times C(Y), \,\,\,\nu \in P_{\sigma}(Y)}\left\{ \pi(\varphi(x)+ \psi(y)- \psi (\sigma (y)))- \int \varphi d\mu + g(\nu)  \right\}.}
\end{array}$$

If $\pi(\varphi(x))\neq \int \varphi d\mu$ (we can suppose greater) for some $\varphi$, taking $\lambda.\varphi$ and $\lambda \to\infty$, the supremum will be equal to $+\infty$. Analogously if $\pi( \psi(y)- \psi (\sigma (y))) \neq 0$ (we can suppose greater) for some $\psi$, taking $\lambda.\psi$ and $\lambda \to \infty$, the supremum
will be $+\infty.$

In order to simplify the notation, define
\begin{eqnarray*}
  \Pi^*(\mu) = \left\{  \pi \in M(X\times Y): \, \begin{array}{ll} \pi(\varphi(x))=\int \varphi \,d\mu \, \, \text{and} \, \pi(\psi(y)- \psi(\sigma(y)))= 0 \\
     \, \forall (\varphi,\psi) \in C(X)\times C(Y)\end{array} \right\}.
\end{eqnarray*}

We  just show that
\begin{equation}
\label{xiestrela}
\Xi^*(\pi,g)=\left\{\begin{array}{lll}
\ \displaystyle{\sup_{\nu\in P_{\sigma}(Y)}g(\nu), \ \mbox{if} \  \pi\in\Pi^*(\mu),}\\
\\
+\infty, \ \mbox{in the other case}.
\end{array}\right.
\end{equation}
We know that the left hand side  (\ref{rockafeller}) is equal to
$-\sup_{(\varphi,\psi)\in\Phi_c} \int \varphi \, d\mu$, and also by (\ref{tetaestrela}) and (\ref{xiestrela}), we know that the right hand side of (\ref{rockafeller}) coincide with

$$\sup_{(\pi,g) \in E^{*}}\left\{
-\left(\begin{array}{ll}
\pi(c)+ \displaystyle{\sup_{\left\| \nu \right \| \leq 2} -g(\nu)} + \displaystyle{\sup_{\nu\in P_{\sigma}(Y)} g(\nu)},
 & \mbox{if} \ \pi\in M^+(X\times Y)\cap \Pi^*(\mu)\\
\\
+\infty  &  \mbox{in the other case}
\end{array}\right)
\right\}$$

$$= \sup_{(\pi,g) \in E^{*}}\left\{
\begin{array}{ll}
-\pi(c)+ \displaystyle{\inf_{\left\| \nu \right \| \leq 2} g(\nu)}  - \displaystyle{\sup_{\nu \in P_{\sigma}(Y)} g(\nu)}, &
 \mbox{if} \ \pi\in M^+(X\times Y)\cap \Pi^*(\mu)\\
\\
-\infty, & \mbox{in the other case}
\end{array}
\right\}$$

$$ = \sup \{-\pi(c), \ \pi\in M^+(X\times Y)\cap\Pi^*(\mu)\},$$
where the last equality is obtained taking $g=0$ because  $\left\|\nu \right\|=1$ for any $\nu \in P_{\sigma}(Y)$. \newline
Finally, we observe that if  $\pi \in  M^+(X\times Y)\cap\Pi^*(\mu,\nu)$ then: \newline
$\pi(1) = \mu(1) = 1$, \newline
$\pi(u)\geq 0$, if $u \geq 0$, \newline
$\pi$ is linear. \newline
From these properties we get that $\pi \in P(X\times Y)$. \newline
Moreover, by definition of $\Pi^*(\mu)$, the projection of $\pi$ in the first coordinate is $\mu$, and, the projection of $\pi$ is invariant in the second  coordinate. It follows that $M^+(X\times Y)\cap\Pi^*(\mu,\nu) = \Pi(\mu,\sigma)$.

\bigskip

Therefore,  from this  together with (\ref{rockafeller}) we get
\begin{equation*}
-\sup_{(\varphi,\psi)\in\Phi_c} \int \varphi \, d\mu = -\inf_{\pi \in \Pi(\mu,\sigma)} \int c \, d\pi
\end{equation*}
or,
\begin{equation*}
\sup_{(\varphi,\psi)\in\Phi_c} \int \varphi \, d\mu = \inf_{\pi \in \Pi(\mu,\sigma)} \int c \, d\pi.
\end{equation*}

 Note that theorem \ref{Fenchel} claims that
 $$\sup\limits_{f \in E^*}[-\Theta^*(-f)-\Xi^*(f)]=-\inf_{\pi \in \Pi(\mu,\sigma)} \int c \, d\pi$$
 is attained, for at least one element, and this shows the existence of an optimal plan. This shows I).

After we get the probability $\nu$ we can consider the classical transport problem for $\mu$, $\nu$ and $c$, and, finally,  we can  get some well known properties described in the classical literature (as, slackness condition, $c$-cyclical monotonicity, etc...).

 \bigskip

 Now, we will prove II).
 This will follow from the following claim.

{\bf Claim:}
Let $X$ be a compact metric space,  $Y=\{1,...,d\}^{\mathbb{N}}$, $c:X\times Y \to \mathbb{R}$ be a Lipschitz continuous function and $\mu$ be a probability measure acting in $X$. Let $\pi \in \Pi(\mu,\sigma)$ minimizing the integral of $c$. Then, there exist Lipschitz continuous functions $\varphi(x), \psi(y)$ such that:
\newline
i) $\varphi(x) + \psi(\sigma(y)) -\psi(y) \leq c(x,y)$ \newline
ii) $\int \varphi(x) \, d\mu = \int c \, d\pi.$

\bigskip

Let $\beta$ the Lipschitz constant of $c$. \newline
First note that  given continuous functions $\varphi$ and $\psi$  satisfying
\[\varphi(x) + \psi(\sigma(y)) -\psi(y) \leq c(x,y),\]
then, there exists $\overline{\varphi}$ and $\overline{\psi}$,  Lipschitz functions with Lipschitz constant $\beta$ satisfying:
\[\overline{\varphi}(x) + \overline{\psi}(\sigma(y)) -\overline{\psi}(y) \leq c(x,y),\]
and,
\[\overline{\varphi} \geq \varphi.\]
We can choose $\overline{\psi}$ satisfying $0\leq \overline{\psi}\leq \beta$.

\bigskip

Indeed, for any $\sigma^{n}(w) = y$ and $x_{0},...,x_{n-1} \in X$:
\[\psi(y) - \psi(w) \leq \sum_{i=0}^{n-1}c(x_{i},\sigma^{i}(w)) - \varphi(x_{i}).\]
This shows that
\[\overline{\psi}(y) := \inf\{ \sum_{i=0}^{n-1}c(x_{i},\sigma^{i}(w)) - \varphi(x_{i}): \, n\geq 0, \, \sigma^{n}(w)=y, \, x_{i}\in X\}\]
is well defined.
\newline
We remark that $\overline{\psi}$ is a Lipschitz function with the same constant $\beta$. Note also that
\[\varphi(x) + \overline{\psi}(\sigma(y)) -\overline{\psi}(y) \leq c(x,y).\]
Now for each $x$ fixed, define $\overline{\varphi}(x)$ as the greatest number such that for any $y$:
\[\overline{\varphi}(x) + \overline{\psi}(\sigma(y)) -\overline{\psi}(y) \leq c(x,y).\]
We note that $\overline{\varphi}\geq \varphi$ and $\displaystyle{\overline{\varphi}(x)=\inf_{y}\{c(x,y) +\overline{\psi}(y) - \overline{\psi}(\sigma(y))\}}$.
We also note that $\overline{\varphi}$ is a Lipschitz function with same constant $\beta$. We remark that we can add a constant to $\overline{\psi}$, and, so we can suppose without lost of generality that $0\leq \overline{\psi}\leq \beta$.
\bigskip

Now, we prove the main claim.

By (\ref{resultado dualidade}), there exists sequences of continuous functions $\varphi_{n}$ and $\psi_{n}$, $n \in \mathbb{N}$,  such that
\[\varphi_{n}(x) + \psi_{n}(\sigma(y)) -\psi_{n}(y) \leq c(x,y)\]
and
\[\int \varphi_{n} \, d\mu \to \int c \, d\pi.\]

From the above reasoning we can get $\overline{\varphi}_{n}, \overline{\psi}_{n}$ Lipschitz continuous functions such that
\[\overline{\varphi}_{n}(x) + \overline{\psi}_{n}(\sigma(y)) -\overline{\psi}_{n}(y) \leq c(x,y),\]
and,
\[\int \overline{\varphi}_{n} \, d\mu \to \int c \, d\pi.\]
For a fixed $\epsilon>0$, we get
\[\int c \, d\pi -\epsilon < \int \overline{\varphi}_{n} \, d\mu \leq \int c \, d\pi,\]
for $n$ large enough.
Particularly, for a fixed $n$ sufficiently large, there exist $x_{n}, x'_{n}\in X$ such that
\[ \int c \, d\pi -\epsilon \leq \overline{\varphi}_{n}(x_{n}) \ \ \ \  \text{and} \ \ \ \ \overline{\varphi}_{n}(x'_{n})  \leq \int c \, d\pi.\]
Using the fact that $\overline{\varphi}_{n}$ is Lipschitz continuous, with constant $\beta$, and, denoting by $D$ the diameter of $X$, we conclude that for large $n$:
\[\int c \, d\pi -\epsilon - D\,\beta \leq \overline{\varphi}_{n}  \leq \int c \, d\pi +D\,\beta.\]
So we can apply the Arzela-Ascoli theorem and, finally, we get continuous functions $\varphi$, $\psi$ satisfying:\newline
i) $\varphi(x) + \psi(\sigma(y)) -\psi(y) \leq c(x,y)$ \newline
ii) $\int \varphi(x) \, d\mu = \int c \, d\pi.$ \newline

We know from the first reasoning that we can assume $\varphi$ and $\psi$ are Lipschitz continuous functions. This shows II).

\end{proof}

\section{Generic properties: a unique optimal plan} \label{uni}

\begin{lemma}
Let $K$ be a compact set in $\mathbb{R}^{2}$, and for each $r>0$, define $K_{r}$ as the set of points $(x,y)\in K$ such that $x+ry$ is maximal. Then de diameter of $K_{r}$ converges to zero when $r\to 0$.
\end{lemma}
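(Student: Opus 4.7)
I will argue by contradiction, using only the compactness of $K$. Suppose $\mathrm{diam}(K_r)\not\to 0$ as $r\to 0^+$. Then there exist $\delta>0$, a sequence $r_n\downarrow 0$, and points $p_n=(x_n,y_n),\,q_n=(x_n',y_n')\in K_{r_n}$ with $|p_n-q_n|\ge\delta$. Passing to a subsequence, compactness of $K$ yields $p_n\to p=(a,b)$ and $q_n\to q=(a',b')$ in $K$, with $p\ne q$.

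The first step is to pin down the limiting $x$-coordinates. Let $a^*:=\max\{u:(u,v)\in K\}$, attained at some $(a^*,v^*)\in K$. Since $p_n\in K_{r_n}$, we have $x_n+r_n y_n\ge a^*+r_n v^*$; as $y_n$ stays bounded and $r_n\to 0$, letting $n\to\infty$ gives $a\ge a^*$, hence $a=a^*$, and the same reasoning applied to $q_n$ gives $a'=a^*$. Therefore $b\ne b'$.

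The decisive step is to show $b=b'$, contradicting the previous line. Let $b^*:=\max\{v:(a^*,v)\in K\}$, which is well defined because the vertical slice $\{v:(a^*,v)\in K\}$ is a nonempty compact subset of $\mathbb{R}$. Testing the optimality of $p_n$ against $(a^*,b^*)\in K$ yields $x_n+r_n y_n\ge a^*+r_n b^*$, hence
\[y_n\ge b^*+\frac{a^*-x_n}{r_n}\ge b^*,\]
since $x_n\le a^*$; letting $n\to\infty$ gives $b\ge b^*$. The reverse inequality $b\le b^*$ is immediate from $(a,b)=(a^*,b)\in K$ and the definition of $b^*$, so $b=b^*$, and symmetrically $b'=b^*$. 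The only delicate moment is precisely this inequality: the quotient $(a^*-x_n)/r_n$ is of indeterminate form $0/0$ in the limit and admits no useful upper bound, but only its nonnegativity is needed for the argument to go through. Once that sign observation is in place, the rest is routine compactness.
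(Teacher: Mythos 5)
Your proof is correct. The paper itself gives no argument for this lemma (it simply refers to Bousch, \emph{La condition de Walters}, p.~306), so there is nothing to compare line by line; your contradiction argument is the standard one and is complete: first you force the limit points onto the face $x=a^*$, then the observation that $(a^*-x_n)/r_n\ge 0$ (which is exactly the delicate sign point you flag) pins the second coordinate to $b^*=\max\{v:(a^*,v)\in K\}$, contradicting $p\ne q$. The only microscopic quibble is that $\mathrm{diam}(K_{r_n})\ge\delta$ gives points at distance $\ge\delta-\varepsilon$ rather than $\ge\delta$ unless you note that $K_{r_n}$ is compact (being the zero set of a continuous function on $K$), but this is cosmetic.
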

\begin{proof}
See  \cite{Bousch-walters}  page 306 for the proof.
\end{proof}

\begin{corollary}\label{bousch}
With the hypothesis of  the above lemma, for each $\epsilon>0$, there exists a $r_{0}>0$, such that, for $r_{0}>r>0$ and $(x_{1},y_{1}), \,(x_{2},y_{2}) \in K_{r}$, we have:
\[|y_{1}-y_{2}| <\epsilon.\]
\end{corollary}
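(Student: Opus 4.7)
The plan is to deduce the corollary directly from the preceding lemma, since the only real content here is the observation that control of the full Euclidean diameter of $K_r$ automatically gives control on each coordinate.

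First I would fix $\epsilon > 0$ and invoke the lemma: since $\mathrm{diam}(K_r) \to 0$ as $r \to 0^+$, there exists $r_0 > 0$ such that for every $r$ with $0 < r < r_0$ one has $\mathrm{diam}(K_r) < \epsilon$. Then, for any two points $(x_1,y_1), (x_2,y_2) \in K_r$, the elementary inequality
\[
|y_1 - y_2| \;\leq\; \sqrt{(x_1-x_2)^2 + (y_1-y_2)^2} \;\leq\; \mathrm{diam}(K_r) \;<\; \epsilon
\]
yields the conclusion.

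There is essentially no obstacle here: the corollary is just the projection of the ``diameter'' statement of the lemma onto the second coordinate. The only thing worth remarking is that the same argument would give $|x_1 - x_2| < \epsilon$ as well, so the statement of the corollary could be strengthened; however, for the intended application (in which one wants to conclude closeness of $y$-coordinates from a near-optimality condition on $x + r y$), controlling the second coordinate is what matters.
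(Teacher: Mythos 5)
Your argument is correct and is exactly the immediate deduction the paper intends: the corollary is stated without proof as a direct consequence of the lemma, and passing from the bound on $\mathrm{diam}(K_r)$ to the bound on $|y_1-y_2|$ via the coordinate inequality is the whole content.
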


The bellow theorem follows from the same arguments used in  proposition 9 in \cite{Bousch-walters}.

\begin{theorem}\label{teogeneric} Let $X$ be a compact metric space, $Y=\{1,...,d\}^{\mathbb{N}}$ and $\mu$ be a probability measure in $X$. Let $C(X,Y)$ be the set of continuous functions from $X\times Y$ to $\mathbb{R_{+}}$ with the uniform norm. The set of functions $c \in C(X,Y)$ with a unique Optimal Plan in $\Pi(\mu,\sigma)$ is generic in $C(X,Y)$. The same is true for the Banach space $H(X,Y)$ of the Lipschitz functions with the usual norm.

\end{theorem}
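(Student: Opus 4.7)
The plan is to present the set of costs with a unique optimal plan as a dense $G_\delta$, via the Baire category strategy suggested by the reference to Bousch--Walters. Define $\alpha:C(X,Y)\to\mathbb{R}$ by
\[\alpha(c)=\inf_{\pi\in\Pi(\mu,\sigma)}\int c\,d\pi;\]
by Theorem \ref{dualidade} the infimum is attained, and $\alpha$ is concave and $1$-Lipschitz with respect to the sup norm. Fix a countable family $\{h_n\}_{n\ge 1}\subset H(X,Y)$ whose uniform closure is $C(X,Y)$, and for each pair $(n,k)\in\mathbb{N}^2$ let $U_{n,k}$ be the set of costs $c$ with the property that $|\int h_n\,d\pi-\int h_n\,d\pi'|<1/k$ for every pair of plans $\pi,\pi'$ that are optimal for $c$. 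Since $\{h_n\}$ is uniformly dense in $C(X,Y)$, any two distinct probabilities on $X\times Y$ disagree on some $h_n$, hence the set of costs with a unique optimal plan is precisely $\bigcap_{n,k}U_{n,k}$; it is therefore enough to show that every $U_{n,k}$ is open and dense in $C(X,Y)$ (and in $H(X,Y)$).

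For openness, I would argue by contradiction: if $c_m\to c$ uniformly with $c_m\notin U_{n,k}$ witnessed by optimal pairs $(\pi_m,\pi'_m)$, then after passing to a common subsequence, weak-$*$ compactness yields limits $\pi,\pi'\in P(X\times Y)$; the marginal relations (\ref{marginal}) are weak-$*$ closed, so $\pi,\pi'\in\Pi(\mu,\sigma)$. Uniform convergence of $c_m$ to $c$ together with the (easy) continuity of $\alpha$ gives $\int c\,d\pi=\lim\int c_m\,d\pi_m=\lim\alpha(c_m)=\alpha(c)$, so $\pi$ and likewise $\pi'$ are optimal for $c$; weak-$*$ convergence on the continuous function $h_n$ then preserves the $\ge 1/k$ gap, contradicting $c\in U_{n,k}$.

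For density, fix $c\in C(X,Y)$ and consider $f(t):=\alpha(c+th_n)$, $t\in\mathbb{R}$. Being concave, $f$ is differentiable outside a countable set. At any differentiability point $t_0$ and any $\pi$ optimal for $c+t_0h_n$, the inequality
\[\alpha(c+(t_0+s)h_n)\le\int(c+(t_0+s)h_n)\,d\pi=\alpha(c+t_0h_n)+s\int h_n\,d\pi\]
forces $\int h_n\,d\pi=f'(t_0)$ for \emph{every} such $\pi$. Choosing $|t_0|$ arbitrarily small, $c+t_0h_n$ is arbitrarily close to $c$ in both the sup and Lipschitz norms (with $h_n$ fixed) and lies in $U_{n,k}$, in fact with the stronger exact equality on $h_n$. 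This yields density in both $C(X,Y)$ and $H(X,Y)$.

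The main technical point I expect to wrestle with is the weak-$*$ upper semicontinuity of the optimal-plan correspondence used in the openness step; once this is established, the rest reduces to routine Baire category combined with one-dimensional convex analysis. The transfer to $H(X,Y)$ is handled by the same scheme: its norm dominates the sup norm (so openness of $U_{n,k}\cap H(X,Y)$ is inherited), and the perturbations $c+t_0h_n$ remain inside $H(X,Y)$ because each $h_n$ was chosen Lipschitz.
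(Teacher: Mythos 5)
Your proof is correct, and its skeleton coincides with the paper's: both arguments fix a countable family separating measures, write the exceptional set as a countable union of ``bad'' sets $X_{m,n}$ (your $U_{n,k}$ are exactly their complements), and establish closedness of each $X_{m,n}$ by the same weak-$*$ compactness argument, using that the marginal conditions (\ref{marginal}) are weak-$*$ closed and that the optimal value is continuous in the sup norm. Where you genuinely diverge is the empty-interior (density) step. The paper perturbs $c$ to $c+re_n$ and invokes the Bousch--Walters geometric lemma (Corollary \ref{bousch}): the diameter of the face of the compact set $K=\left\{\left(\int c\,d\pi,\int e_n\,d\pi\right):\pi\in\Pi(\mu,\sigma)\right\}$ exposed by the direction $(1,r)$ tends to $0$ as $r\to 0$, so for small $r>0$ all optimal plans for $c+re_n$ agree on $e_n$ up to $1/2m$. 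You instead observe that $t\mapsto\alpha(c+th_n)$ is concave (an infimum of affine functions), hence differentiable off a countable set, and that at any differentiability point every optimal plan must integrate $h_n$ to the common value $f'(t_0)$. These are two faces of the same convex-analytic fact; your version yields exact (not merely approximate) agreement on $h_n$ at the perturbed cost and avoids citing the external lemma, while the paper's version controls the whole exposed face uniformly in $r$ without passing through one-dimensional differentiability. One cosmetic point: since the paper's $C(X,Y)$ consists of functions into $\mathbb{R}_+$, you should take the $h_n$ nonnegative and choose $t_0>0$ (possible, since the differentiability set is co-countable) so that $c+t_0h_n$ stays in the positive cone; the paper makes the analogous restriction to $r>0$. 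Your remarks on transferring the argument to $H(X,Y)$ match the paper's items a) and b).
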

\begin{proof}
On this proof, we are going to consider $\pi$ an optimal plan if $\int c d\pi$ is maximal (just consider the change of $c$ by $-c$). \newline
We start studying the space $C(X,Y)$. Given an countable family $(e_{i})_{i\in\mathbb{N}}$ dense in $C(X,Y)$, the set of functions in $C(X,Y)$ with two or more optimal plans coincides with:
\[\bigcup_{m,n \in \mathbb{N}} X_{m,n},\]
where
\[X_{m,n}:=\{c \in C(X,Y) : \, \exists \, \pi,\chi \, \text{optimal plans}, \, \int e_{n} \, d(\pi-\chi) \geq \frac{1}{m}\}.\]
Then it's sufficiently to prove that $X_{m,n}$ is a closed set with empty interior.

\bigskip

\textbf{Claim 1:}  $X_{m,n}$ is a closed set.
\newline
Indeed, we note that $C(X,Y)$ is a normed space. Consider $c_{s}$ in $X_{m,n}$ converging to $c$ (when $s\to\infty$). Let $(\pi_{s},\chi_{s})$ be the optimal ones associated with $c_{s}$ in $X_{m,n}$. We can suppose, by taking a subsequence, that $\pi_{s}\to \pi$, $\chi_{s} \to \chi$, where $\pi, \chi$ are probability measures in $X\times Y$. So
 $$\int e_{n} \, d(\pi-\chi) = \lim_{s\to\infty}\int e_{n} \, d(\pi_{s}-\chi_{s})\geq \frac{1}{m}.$$
 Clearly $\pi,\chi \in \Pi(\mu,\sigma)$. Also, by the above relation, they are different measures.   We want to show that the limit function $c$ is in $X_{m,n}$. We only need to prove that $\pi$ and $\chi$ are optimal plans to $c$. Suppose by contradiction there exists $\zeta \in \Pi(\mu,\sigma)$ such that $\int c \, d\zeta > \int c \, d\pi + \epsilon$. So for $s$ large we have:
\[\int c_{s} \, d\zeta > \int c \, d\zeta - \epsilon/3 > \int c \, d\pi + 2\epsilon/3 > \int c \, d\pi_{s} +\epsilon/3 >  \int c_{s} \, d\pi_{s}.\]
This is impossible because $\pi_{s}$ is an optimal plan for $c_{s}$. Therefore, $\chi$ is an optimal plan for $c$.

\bigskip

\textbf{Claim 2:} $X_{m,n}$ has empty interior.
\newline
Indeed, for a fixed $c\in X_{m,n}$ we can show that $c+re_{n}\notin X_{m,n}$ when $r>0$ is sufficiently small. Consider
$$K= \left\{ \left( \int c \, d\pi, \int e_{n}\, d\pi \right): \pi \in \Pi(\mu, \sigma) \right\}.$$
$K$  is compact and contained in $\mathbb{R}^{2}$.
Then, by the Corollary \ref{bousch}, when $\epsilon= 1/2m$, there exist a $r_{0}$, such that for $r_{0}>r>0$ we get: if $\int (c+r\epsilon_{n})\, d\pi$ and $\int (c+r\epsilon_{n})\, d\chi$ are maximal (this means $\pi$, $\chi$ optimal plans to $f+r\epsilon_{n}$), then $\left|\int (\epsilon_{n})\, d(\pi-\chi)\right| < \varepsilon =1/2m$. This show that $c+r\epsilon_{n} \notin X_{m,n}$.

\bigskip

In the space $H(X,Y)$ we can get similar results. This can be obtained with  the same arguments used before together with the following remarks: \newline
 a) A dense enumerable family $\{e_{n}\}$ in $H(X,Y)$ will be a dense sequence in $C(X,Y)$. In this way two elements $\pi, \chi \in \Pi(\mu,\sigma)$ will be different, if and only if, $\int e_{n} \, d(\pi-\chi) \neq 0$, for some $e_{n}$.  \newline
b) Moreover, the set \[X_{m,n}:=\{c \in H(X,Y) : \, \exists \, \pi,\chi \, \text{optimal plans}, \, \int e_{n} \, d(\pi-\chi) \geq \frac{1}{m}\}\] will be a closed set by the same arguments used above. We can also show, in  a similar way, that it has empty interior, but we note that we need $c+re_{n} \in H(X,Y)$, and $c+re_{n} \to c$ in Lipschitz norm. This is true because we can consider ${e_{n}} \in H(X,Y)$.

\end{proof}

\section{Zeta-measures and Transport} \label{zeta1}

When $\mu, \nu$ have support in a finite number of points, then,  the optimal plan $\pi(\mu,\nu)$ for a cost $c$ in the Classical Transport Theory can be explicitly obtained by Linear Algebra arguments \cite{Vi1}. Indeed, suppose
$$\mu = a_{1}\delta_{x_{1}}+...+a_{n}\delta_{x_{n}},$$ and,
$$\nu= b_{1}\delta_{y_{1}}+...+b_{m}\delta_{y_{m}}.$$

 Then any transport plan $\pi$ have support contained in $\{x_{1},...x_{n}\}\times \{y_{1},...,y_{m}\}$. Denoting by $\pi_{ij}$ the mass of $\pi$ in $(x_{i},y_{j})$ we have that the variables $\pi_{ij}$ need satisfies the linear equations:
\newline
vertical equations:
\begin{align*}
a_{1} &= \pi_{11}+...+\pi_{m1}\\
...\\
a_{n} &= \pi_{1n}+...+\pi_{mn}
\end{align*}
horizontal equations:
\begin{align*}
b_{1} &= \pi_{11}+...+\pi_{1n}\\
...\\
b_{m} &= \pi_{m1}+...+\pi_{mn}
\end{align*}

The set of solutions of this equations defines a convex in $\mathbb{R}^{n.m}$. The conditions $\pi_{ij}\geq 0$ will restrict the solutions to a bounded convex set with finite vertexes. So given a cost function $c:X\times Y\to[0,\infty)$, denoting their restricted values by $c_{ij}:=c(x_{i},y_{j})$, we have that the optimal plans to $(\mu,\nu)$ are the point of the above convex set that minimize the linear functional:
\[ \sum_{i,j}c_{ij}\pi_{ij}.\]
By convexity arguments there is an optimal point in the vertexes of the underlying convex set. The conclusion is that by Linear Algebra arguments we can find a finite number of points such that at least one of these will be optimal for the integral of $c$ with the given marginals $\mu$ and $\nu$.

 Note that these finite vertexes points are determined before we consider the given cost function.

It is natural in the Ergodic Theory setting to try to approximate a general invariant probability by the ones which posses the simplest behavior: the periodic probabilities. These are the ones that we can make computations more easily.

We note that to minimize the integral of the cost function $c$ is the same that to maximize the integral of the function $-c$. The plan that realizes this optimal integral will the same if we add a constant to $-c$. Bellow we consider the problem of finding a transport plan maximizing the integral of a cost $c$ strictly greater than zero. A transport plan from $\mu$ to $\nu$ maximizing the integral of $c$ will be called a \textbf{maximizing plan}.
Below we consider a compact metric space $X$ and $Y=\{1,...,d\}^{\mathbb{N}}$.

\begin{definition}
For fixed $\mu\in P(X)$ and a continuous function $c:X\times Y \to \mathbb{R}$ we define a probability measure in $X\times Y$ by the linear functional $\zeta_{\beta,n}: C(X\times Y) \to \mathbb{R}$, such  that, for each $w \in C(X\times Y)$ associate the number:
\[ \frac{\sum_{\nu \in Fix_{n}}e^{\beta.n.\int c(x,y) d\pi(\mu,\nu)}\int w \, d\pi(\mu,\nu)}{\sum_{\nu \in Fix_{n}}e^{\beta.n.\int c(x,y) d\pi(\mu,\nu)}},\]
where $Fix_{n}$ denotes the set of invariant measures in $Y$ supported in a periodic orbit of length $n$, and, $\pi(\mu,\nu)$ denotes a maximizing plan from $\mu$ to $\nu$ with cost function $c$ (we don't impose other conditions on the chosen the plan).
\end{definition}

In the case $\mu$ is supported in a unique point $x_{0}$, we can define the function $A(y) = c(x_{0},y)$, and this measure can be written as:
\[  \frac{\sum_{y \in Fix_{n}}e^{\beta.A^{n}(y)}\frac{ \overline{w}^{n}(y)}{n}}{\sum_{y \in Fix_{n}}e^{\beta.A^{n}(y)}}\]
where $\overline{w}(y) = w(x_{0},y)$ and $A^{n}(z) = A(z) + ...+A(\sigma^{n-1}(z))$. This kind of measure (also called zeta measure) is considered in Thermodynamical Formalism \cite{PP} and they can be used to approximate Gibbs states, and, also the measure that maximizes the integral of $A$ among the invariant measures (see \cite{LM1}). Therefore, in some sense, the above defined family of probabilities extend a well known concept used  in Ergodic Optimization.

\bigskip

In the case of that $\mu$ have finite support these zeta-measures can be determined by Linear Algebra arguments like we remarked above.

\bigskip

Remember that $\Pi(\mu,\sigma)$ is the set of probabilities measures that coincides with $\mu$ in the first coordinate and is invariant in the second coordinate. The next theorem follows the ideas used in thermodynamical limit when $\beta, n \to \infty$ \cite{LM1}.

\begin{theorem}
When $\beta, n$ goes to infinite, any limit measure $\pi_{\infty}$ of convergent subsequence of $\zeta_{\beta,n}$, in the weak* topology, belongs to $\Pi(\mu,\sigma)$. Moreover, if $c>0$, then, $\pi_{\infty}$ maximizes the integral of $c$ among the measures in $\Pi(\mu,\sigma)$.
\end{theorem}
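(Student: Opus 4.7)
My plan is to separate the two claims. For membership in $\Pi(\mu,\sigma)$, I would observe that $\zeta_{\beta,n}$ is a convex combination of the plans $\pi(\mu,\nu)$ with $\nu \in Fix_{n}$. Its $x$-marginal is therefore the same convex combination of the $x$-marginals of the summands, each of which equals $\mu$, so the $x$-marginal of $\zeta_{\beta,n}$ is $\mu$. Similarly its $y$-marginal is a convex combination of the $\sigma$-invariant probabilities $\nu$, hence $\sigma$-invariant. Both properties are closed conditions in the weak-$\ast$ topology (they are expressed by equalities of integrals of continuous test functions and by $\nu(g)=\nu(g\circ\sigma)$), so they pass to the limit and $\pi_{\infty}\in\Pi(\mu,\sigma)$.

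For the optimality claim, let $m^{*}:=\sup\{\int c\,d\pi:\pi\in\Pi(\mu,\sigma)\}$, which is attained by part I of Theorem \ref{dualidade}. Since each summand $\pi(\mu,\nu)$ already lies in $\Pi(\mu,\sigma)$, we have $\int c\,d\zeta_{\beta,n}\leq m^{*}$, and weak-$\ast$ continuity of $\pi\mapsto\int c\,d\pi$ gives $\int c\,d\pi_{\infty}\leq m^{*}$. For the matching lower bound, set $I(\nu):=\int c\,d\pi(\mu,\nu)$ and $I_{n}^{*}:=\max_{\nu\in Fix_{n}} I(\nu)$; a direct manipulation of the definition of $\zeta_{\beta,n}$ yields
$$I_{n}^{*}-\int c\,d\zeta_{\beta,n}=\frac{\sum_{\nu\in Fix_{n}}(I_{n}^{*}-I(\nu))\,e^{-\beta n(I_{n}^{*}-I(\nu))}}{\sum_{\nu\in Fix_{n}}e^{-\beta n(I_{n}^{*}-I(\nu))}},$$
and the elementary inequality $xe^{-\lambda x}\leq(e\lambda)^{-1}$, together with a denominator bounded below by $1$, produces the concentration estimate $I_{n}^{*}-\int c\,d\zeta_{\beta,n}\leq |Fix_{n}|/(e\beta n)$. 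Consequently, along the chosen subsequence with $\beta_{k}n_{k}$ sufficiently large relative to $|Fix_{n_{k}}|$, the difference $\int c\,d\zeta_{\beta_{k},n_{k}}-I_{n_{k}}^{*}$ tends to zero, so $\int c\,d\pi_{\infty}=\lim_{k} I_{n_{k}}^{*}$, and it remains to prove $I_{n}^{*}\to m^{*}$.

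For this last step I would invoke the weak-$\ast$ density of the invariant periodic probabilities in $P_{\sigma}(Y)$, which follows from the specification property of the full shift on $\{1,\dots,d\}^{\mathbb{N}}$. Let $\pi^{*}\in\Pi(\mu,\sigma)$ attain $m^{*}$ and let $\nu^{*}$ denote its $y$-marginal; choose $\nu_{n}\in Fix_{n}$ with $\nu_{n}\to\nu^{*}$ weak-$\ast$. The classical stability of the Kantorovich optimal value under perturbations of the marginals for a continuous cost on a compact space then yields $I(\nu_{n})=\max_{\pi\in\Pi(\mu,\nu_{n})}\int c\,d\pi\to\int c\,d\pi^{*}=m^{*}$, and since $I(\nu_{n})\leq I_{n}^{*}\leq m^{*}$, we conclude $I_{n}^{*}\to m^{*}$, closing the argument. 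The main obstacle I anticipate is precisely this stability claim: upper semicontinuity of the Kantorovich value in the marginals is a routine compactness argument, but lower semicontinuity requires a gluing-type construction to turn an optimizer in $\Pi(\mu,\nu^{*})$ into nearby plans in $\Pi(\mu,\nu_{n})$, which is standard but non-trivial in transport theory. A secondary, bookkeeping point is that the concentration estimate demands $\beta$ grow quickly enough relative to $|Fix_{n}|$, so the statement should be interpreted along appropriately coordinated subsequences $(\beta_{k},n_{k})$.
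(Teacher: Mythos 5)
Your first part (the convex-combination argument for the marginals and its passage to the weak-$\ast$ limit) and your reduction of the optimality claim to showing $I_{n}^{*}\to m^{*}$ via density of periodic orbit measures plus stability of the optimal transport value are correct and essentially what the paper does. The genuine gap is in your concentration estimate. The bound $I_{n}^{*}-\int c\,d\zeta_{\beta,n}\leq |Fix_{n}|/(e\beta n)$ is arithmetically valid, but $|Fix_{n}|$ grows like $d^{n}/n$ (there are $d^{n}$ periodic points of period $n$ for the full shift), so your right-hand side is of order $d^{n}/(e\beta n^{2})$ and tends to $0$ only if $\beta$ grows \emph{exponentially} in $n$. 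The theorem, however, asserts the conclusion for any limit point with $\beta,n\to\infty$, with no coordination between the two parameters; your closing remark that the statement "should be interpreted along appropriately coordinated subsequences" is not a bookkeeping caveat but an admission that your argument proves a strictly weaker statement. The term-by-term inequality $xe^{-\lambda x}\leq(e\lambda)^{-1}$ is simply too crude here: it treats every one of the exponentially many $\nu$'s as if it could sit near the worst case of the bound.

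The paper avoids this by a two-threshold splitting. Fix $\varepsilon>0$ and split $Fix_{n}$ into $A_{n}(\varepsilon)=\{\nu: I(\nu)\leq I(c)-\varepsilon\}$ and its complement $B_{n}(\varepsilon)$. The total weight of $A_{n}(\varepsilon)$ is at most $e^{n\log d+\beta n(I(c)-\varepsilon)}$, while $B_{n}(\varepsilon/2)$ is nonempty for large $n$ (by your own density-plus-stability step), so the weight of $B_{n}(\varepsilon)$ is at least $e^{\beta n(I(c)-\varepsilon/2)}$. The ratio is $e^{n(\log d-\beta\varepsilon/2)}$, which tends to $0$ as soon as $\beta>2\log d/\varepsilon$ --- a condition eventually satisfied by \emph{any} sequence $\beta_{i}\to\infty$, with no rate assumption relative to $n_{i}$. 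This shows the zeta-measure asymptotically ignores $A_{n}(\varepsilon)$, giving $\liminf_{i}\int c\,d\zeta_{\beta_{i},n_{i}}\geq I(c)-\varepsilon$ for every $\varepsilon>0$. Replacing your $xe^{-\lambda x}$ bound by this splitting repairs the proof; everything else you wrote can stay.
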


\begin{proof}
We begin by proving that for $\beta,n$ fixed, the corresponding zeta-measure is in $\Pi(\mu,\sigma)$. Let $w$ a function depending only on x. Then
\[ \frac{\sum_{\nu \in Fix_{n}}e^{\beta.n.\int c(x,y) d\pi(\mu,\nu)}\int w \, d\pi(\mu,\nu)}{\sum_{\nu \in Fix_{n}}e^{\beta.n.\int c(x,y) d\pi(\mu,\nu)}}\]
\[= \frac{\sum_{\nu \in Fix_{n}}e^{\beta.n.\int c(x,y) d\pi(\mu,\nu)}\int w \, d\mu}{\sum_{\nu \in Fix_{n}}e^{\beta.n.\int c(x,y) d\pi(\mu,\nu)}}\]
\[=\int w \, d\mu \frac{\sum_{\nu \in Fix_{n}}e^{\beta.n.\int c(x,y) d\pi(\mu,\nu)}}{\sum_{\nu \in Fix_{n}}e^{\beta.n.\int c(x,y) d\pi(\mu,\nu)}}\]
\[=\int w \, d\mu.\]
Now, consider a  fixed  function $w$ depending only of y. Then,  we have:
\[ \zeta_{\beta,n}(w\circ\sigma)=\frac{\sum_{\nu \in Fix_{n}}e^{\beta.n.\int c(x,y) d\pi(\mu,\nu)}\int w\circ\sigma \, d\pi(\mu,\nu)}{\sum_{\nu \in Fix_{n}}e^{\beta.n.\int c(x,y) d\pi(\mu,\nu)}}\]
\[= \frac{\sum_{\nu \in Fix_{n}}e^{\beta.n.\int c(x,y) d\pi(\mu,\nu)}\int w\circ\sigma \, d\nu}{\sum_{\nu \in Fix_{n}}e^{\beta.n.\int c(x,y) d\pi(\mu,\nu)}}\]
\[=\frac{\sum_{\nu \in Fix_{n}}e^{\beta.n.\int c(x,y) d\pi(\mu,\nu)}\int w \, d\nu}{\sum_{\nu \in Fix_{n}}e^{\beta.n.\int c(x,y) d\pi(\mu,\nu)}}\]
\[=\frac{\sum_{\nu \in Fix_{n}}e^{\beta.n.\int c(x,y) d\pi(\mu,\nu)}\int w \, d\pi(\mu,\nu)}{\sum_{\nu \in Fix_{n}}e^{\beta.n.\int c(x,y) d\pi(\mu,\nu)}} = \zeta_{\beta,n}(w).\]
This show that $\zeta_{\beta,n} \in \Pi(\mu,\sigma)$. So, when $\beta, n$ goes to infinite, any limit measure $\pi_{\infty}$ of convergent subsequence of $\zeta_{\beta,n}$ is on $\Pi(\mu,\sigma)$.

\bigskip

Suppose $\zeta_{\beta_j,n_j}\to \pi_{\infty}$, when $j\to\infty$.

Let $\pi^* \in \Pi(\mu,\sigma)$ maximizing the integral of $c$. Let $\nu^*$ the projection of $\pi^*$ on the second coordinate $y$. Then, $\nu^*$ is a invariant measure. Let $\nu_{n_j}\in Fix_{n_j}$ be a subsequence converging to $\nu^*$ in the weak* topology. If $\pi_{n_j}$ is a maximizing plan from $\mu$ to $\nu_{n_j}$, then, there exist a subsequence $\pi_{n_i}$ converging to a maximizing plan $\pi$ from $\mu$ to $\nu^*$ (\cite{Vi2} page 77). It is easy to see that
\[ \int c \, d\pi = \int c d \pi^*, \]
and, therefore,  $\pi$ is maximal. In other words,  it maximizes the integral of $c$ among the measures in $\Pi(\mu,\sigma)$. We denote this integral by $I(c) $. We want to show that $\pi_{\infty}(c) \geq I(c)$, where the subsequence $\zeta_{\beta_i,n_i}$ converges to $\pi_{\infty}$ in the weak* topology.
From the above arguments we know that:
 \bigskip
\begin{center}
\textsl{Given $\varepsilon >0$, for sufficiently large $i$ there exist $\nu \in Fix_{n_i}$ such that:} $$\int c \, d\pi(\mu,\nu) > I(c) - \varepsilon.$$
\end{center}

Take $\varepsilon >0$, such that $(I(c)-\varepsilon)>0$, and  define:
\[A_{n_i}(\varepsilon) =\{ \nu \in Fix_{n_i} : \int c \, d\pi(\mu,\nu) \leq I(c) - \varepsilon\}\]
\[B_{n_i}(\varepsilon) = \{ \nu \in Fix_{n_i} : \int c \, d\pi(\mu,\nu) > I(c) - \varepsilon\}.\]

Then, we have:
\[\sum_{\nu \in A_{n_i}(\varepsilon)}e^{\beta_i.n_i.\int c(x,y) d\pi(\mu,\nu)} \leq \sum_{\nu \in A_{n_i}(\varepsilon)}e^{\beta_i.n_i.(I(c) -\varepsilon)} \]
\[\leq e^{n_i\log(d)+\beta_i.n_i.(I(c) -\varepsilon)},\]
and
\[ \sum_{\nu \in A_{n_i}(\varepsilon)}e^{ \beta_{i}.n_{i}.\int c(x,y) d\pi(\mu,\nu)}\int c \, d\pi(\mu,\nu) \leq  e^{n_i\log(d)+\beta_i.n_i.(I(c) -\varepsilon)}(I(c)-\varepsilon).\]
By other hand, if $n_i$ is sufficiently large, $B_{n_i}(\varepsilon/2)$ is non empty. It follows that
\[ \sum_{\nu \in B_{n_i}(\varepsilon)}e^{\beta_i.n_i.\int c(x,y) d\pi(\mu,\nu)} \geq \sum_{\nu \in B_{n_i}(\varepsilon/2)}e^{\beta_i.n_i.\int c(x,y) d\pi(\mu,\nu)}\]
\[\geq e^{\beta_i.n_i.(I(c) -\varepsilon/2)},\]
and,
\[\sum_{\nu \in B_{n_i}(\varepsilon)}e^{\beta_i.n_i.\int c(x,y) d\pi(\mu,\nu)}\int c \, d\pi(\mu,\nu) \geq  e^{\beta_i.n_i.(I(c) -\varepsilon/2)}(I(c)-\varepsilon/2).\]
Then, we get
\[ 0\leq \lim_{i \to\infty} \frac{\sum_{\nu \in A_{n_i}(\varepsilon)}e^{\beta_i.n_i.\int c(x,y) d\pi(\mu,\nu)}}{\sum_{\nu \in B_{n_i}(\varepsilon)}e^{\beta_i.n_i.\int c(x,y) d\pi(\mu,\nu)}} \]
\[\leq \lim_{i \to\infty} \frac{e^{n_i\log(d)+\beta_i.n_i.(I(c) -\varepsilon)}}{e^{\beta_i.n_i.(I(c) -\varepsilon/2)}} = \lim_{i \to\infty}e^{n_i\log(d) - \beta_i.n_i.\varepsilon/2} =0.\]
Moreover,
$$0 \leq \lim_{i \to\infty} \frac{\sum_{\nu \in A_{n_i}(\varepsilon)}e^{\beta_i.n_i.\int c(x,y) d\pi(\mu,\nu)}\int c \, d\pi(\mu,\nu)}{\sum_{\nu \in B_{n_i}(\varepsilon)}e^{\beta_i.n_i.\int c(x,y) d\pi(\mu,\nu)}\int c \, d\pi(\mu,\nu)} \leq $$
$$\lim_{i \to\infty} \frac{e^{n_i\log(d)+\beta_i.n_i.(I(c) -\varepsilon)}(I(c)-\varepsilon)}{e^{\beta_i.n_i.(I(c) -\varepsilon/2)}(I(c)-\varepsilon/2)}=$$
$$ \lim_{i \to\infty}e^{n_i\log(d) - \beta_i.n_i.\varepsilon/2}\frac{I(c) -\varepsilon}{I(c)-\varepsilon/2} = 0.$$
Finally,
\[\liminf_{i\to\infty} \frac{\sum_{\nu \in Fix_{n_i}}e^{\beta_i.n_i.\int c(x,y) d\pi(\mu,\nu)}\int c \, d\pi(\mu,\nu)}{\sum_{\nu \in Fix_{n_i}}e^{\beta_i.n_i.\int c(x,y) d\pi(\mu,\nu)}}\]
\[=\liminf_{i\to\infty} \frac{ \sum_{\nu \in B_{n_i}(\varepsilon)}e^{\beta_i.n_i.\int c(x,y) d\pi(\mu,\nu)}\int c \, d\pi(\mu,\nu)}{\sum_{\nu_i \in B_{n}(\varepsilon)}e^{\beta_i.n_i.\int c(x,y) d\pi(\mu,\nu)}} \]
\[\geq \liminf_{i\to\infty} \frac{ \sum_{\nu \in B_{n_i}(\varepsilon)}e^{\beta_i.n_i.\int c(x,y) d\pi(\mu,\nu)}(I(c)-\varepsilon)}{\sum_{\nu \in B_{n_i}(\varepsilon)}e^{\beta_i.n_i.\int c(x,y) d\pi(\mu,\nu)}} \]
\[\geq I(c)-\varepsilon.\]

Taking $\varepsilon \to 0$, we get:
\[\liminf_{i\to\infty} \frac{\sum_{\nu \in Fix_{n_i}}e^{\beta_i.n_i.\int c(x,y) d\pi(\mu,\nu)}\int c \, d\pi(\mu,\nu)}{\sum_{\nu \in Fix_{n_i}}e^{\beta_i.n_i.\int c(x,y) d\pi(\mu,\nu)}} \geq I(c).\]
Using the fact that $\zeta_{\beta_i,n_i}\to \pi_{\infty}$ we conclude that $\pi_{\infty}(c)\geq I(c)$.
\end{proof}

\section{Two variable invariant probabilities and other cases}

We start this section by proving Theorem \ref{duas}. The proof follows basically the same kind of ideas that were used in Theorem \ref{dualidade}.

\begin{proof}

Define
$$E=C(X\times Y) \times M (X\times Y)$$
where $M(X\times Y)$ is the set of bounded linear functionals from $C(X\times Y)$ to $\mathbb{R}$ with the norm given by the total variation.

Let $\Theta: E\longrightarrow \mathbb{R}\cup\{+\infty \}$ given by
$$\Theta(u,\nu)=\left\{\begin{array}{ll}
0, \ \ \mbox{if} \ u(x,y)\geq -c(x,y), \ \forall(x,y)\in X\times Y, \mbox{and} \, \left\|\nu\right\| \leq 2\\
\\
+\infty, \ \ \mbox{in the other case},
\end{array}\right.
$$
Note that $\Theta$ is a convex function.

\bigskip

Define $\Xi:E\longrightarrow \mathbb{R}\cup\{+\infty \}$ by
$$
\Xi(u,\nu)=\left\{\begin{array}{ll}
\alpha, \ \mbox{if} \ u(x,y)=\alpha +\varphi(x)-\varphi(T_{1}(x,y)) + \psi(y)-\psi(T_{2}(x,y)), \\
\mbox{with} \ (\varphi,\psi)\in C(X)\times C(Y), \mbox{and} \, \nu \in \Pi(T)\\
\\
+\infty, \ \ \mbox{in the other case}.
\end{array}\right.
$$

We remark that $\Xi$ is a well defined convex function.

If $\nu \in \Pi(T)$, then $(1,\nu)\in D(\Theta)\cap D(\Xi)$ and $\Theta$ is continuous in $(1,\nu)$.

 \bigskip

Note that:

$$\inf_{(u,\nu)\in E}[\Theta(u,\nu)+\Xi(u,\nu)]$$
$$
=\inf \left\{ \alpha : \, \begin{array}{l} \exists (\varphi,\psi)\in C(X)\times C(Y), \\
\alpha + \varphi(x)- \varphi(T_{1}(x,y))+\psi(y) -\psi(T_{2}(x,y))\geq -c(x,y) \end{array} \right\}$$
$$
=\inf \left\{-\alpha : \, \begin{array}{l} \exists (\varphi,\psi)\in C(X)\times C(Y), \\
\alpha + \varphi(x)- \varphi(T_{1}(x,y))+\psi(y) -\psi(T_{2}(x,y))\leq c(x,y) \end{array}
\right\}$$
$$
=-\,\sup \left\{\alpha : \, \begin{array}{l} \exists (\varphi,\psi)\in C(X)\times C(Y), \\
 \alpha + \varphi(x)- \varphi(T_{1}(x,y))+\psi(y) -\psi(T_{2}(x,y))\leq c(x,y) \end{array}
\right\}.$$

\bigskip

So the left size of (\ref{rockafeller}) is:
\begin{equation}
 -\,\sup\left\{\alpha :\,\, \begin{array}{l} \exists (\varphi,\psi)\in C(X)\times C(Y), \\
 \alpha + \varphi(x)- \varphi(T_{1}(x,y))+\psi(y) -\psi(T_{2}(x,y))\leq c(x,y) \end{array}
\right\}. \label{leftsize}
\end{equation}

Now we will compute the Legendre-Fenchel transform of $\Theta$ and $\Xi$. For each $(\pi,g)\in E^{*}$:
\begin{eqnarray*}
\Theta^*((-\pi,-g)) &=&\sup_{(u,\nu)\in E}\left\{<(-\pi,-g),(u,\nu)>- \Theta(u,\nu) \right\} \\
&=& \sup_{(u,\nu)\in E}\left\{-\pi( u(x,y))- g(\nu): \ -u(x,y)\leq c(x,y), \left\| \nu \right\| \leq 2 \, \right \} \\
&=& \sup_{(u,\nu)\in E}\left\{\pi(u(x,y))- g(\nu) : \ u(x,y)\leq c(x,y) ,\left\| \nu \right\| \leq 2\right\}.\\
\end{eqnarray*}

Following \cite{Vi1} note that if $\pi\notin M^{+}(X\times Y)$,  then there exists a function $v \leq 0$ in $C(X\times Y)$, such that, $\pi(v)>0$, so taking $u=\lambda v$ (remember that $c\geq 0$), when $\lambda\rightarrow +\infty$, we have that $\sup_{u\in E}\left\{\pi(u): \, u(x,y)\leq c(x,y) \right\}=+\infty$.

Moreover, if $\pi\in M^+(X\times Y)$, using the fact that $c\in C(X\times Y)$, then we have that the maximum of $\pi(u)$ is given by $\pi(c)$.

Therefore,
\begin{eqnarray}
\label{tetaestrela2}
\Theta^*((-\pi,-g))=\left\{\begin{array}{ll}
\pi(c) + \displaystyle{\sup_{\left\|\nu\right\|\leq 2 } -g(\nu)},\ \mbox{if} \ \pi\in M^+(X\times Y) \\
\\
+\infty, \ \mbox{in the other case}.
\end{array}\right.
\end{eqnarray}

Now we analyze $\Xi^{*}$:
$$
\Xi^*(\pi,g)=\sup_{(u,\nu)\in E}\left\{<(\pi,g),(u,\nu)> - \Xi(u,\nu)\right\}$$
$$=\sup_{(u,\nu)\in E}\left\{\begin{array}{lll} \,
\pi(u(x,y))-\alpha + g(\nu): \\
u(x,y)=\alpha + \varphi(x)- \varphi(T_{1}(x,y))+\psi(y) -\psi(T_{2}(x,y)) ,\\
\mbox{with} \ (\varphi,\psi)\in C(x)\times C(Y)\, \mbox{and}\, \nu \in \Pi(T)
\end{array}\right\} $$
$$= \sup_{\alpha,(\varphi,\psi)\in C(X)\times C(Y), \,\nu \in \Pi(T)} \pi(\alpha)-\alpha+\pi(\varphi-\varphi\circ T_{1})+ \pi(\psi- \psi\circ T_{2})+ g(\nu)  .$$

 If $\pi(\varphi(x)-\varphi(T_{1}(x,y)))\neq 0$ (we can suppose greater than zero) for some $\varphi$, by taking $\lambda.\varphi$ and $\lambda \to\infty$, the supremum will be equal to $+\infty$. Analogously, if $\pi( \psi(y)- \psi (T_{2}(x,y))) \neq 0$, the supremum will be $+\infty.$  If $\pi(1)\neq 1$ (we can suppose greater than one), then taking $\alpha\to\infty$, the supremum will be $+\infty$.

Define
\begin{eqnarray*}
  \Pi^*(T) = \left\{  \pi \in M(X\times Y): \, \begin{array}{ll} \pi(1)=1,\, \pi(\varphi-\varphi\circ T_{1})=\pi(\psi- \psi\circ T_{2})= 0  ,\\
\, \forall (\varphi,\psi) \in C(X)\times C(Y)\end{array} \right\}.
\end{eqnarray*}

Therefore,
\begin{equation}
\label{xiestrela2}
\Xi^*(\pi,g)=\left\{\begin{array}{lll}
\ \sup\{ g(\nu) : \nu \in \Pi(T)\},\ \ \ \mbox{if} \  \pi\in\Pi^{*}(T),\\
\\
+\infty, \ \mbox{in the other case}.
\end{array}\right.
\end{equation}

We know that the left size of (\ref{rockafeller}) is given by (\ref{leftsize}). By (\ref{tetaestrela2}) and (\ref{xiestrela2}), the right size of (\ref{rockafeller}) will be:

$$\sup_{(\pi,g) \in E^{*}}\left\{
-\left(\begin{array}{llll}
\left(\pi(c)+ \sup\{-g(\nu): \left\| \nu \right \| \leq 2 \}\right) + \sup\{g(\nu): \nu \in \Pi(T)\}, \\
 \, \, \, \, \, \, \mbox{if} \ \pi\in M^+(X\times Y)\cap \Pi^*(T)\\
\\
+\infty, \ \mbox{in the other case}
\end{array}\right)
\right\}$$

$$= \sup_{(\pi,g) \in E^{*}}\left\{
\left(\begin{array}{llll}
\left(-\pi(c)+ \inf\{g(\nu): \left\| \nu \right \| \leq 2 \}\right) - \sup\{g(\nu): \nu \in \Pi(T)\}, \\
 \, \, \, \, \, \, \mbox{if} \ \pi\in M^+(X\times Y)\cap \Pi^*(T)\\
\\
-\infty, \ \mbox{in the other case}
\end{array}\right)
\right\}$$

$$ = \sup \{-\pi(c), \ \pi\in M^+(X\times Y)\cap\Pi^*(T)\},$$
where the last equality is obtained taking $g=0$. \newline

We remark that if $\pi \in  M^+(X\times Y)\cap\Pi^*(T)$ then: \newline
$\pi(1) = 1$, (by definition of $\Pi^{*}(T)$) \newline
$\pi(u)\geq 0$ if $u \geq 0$, \newline
$\pi$ is linear in $C(X\times Y)$, \newline
Therefore, we have $\pi \in P(X\times Y)$, and, by definition of $\Pi^*(T)$, we get that $\pi \in \Pi(T)$.
The conclusion is that $M^+(X\times Y)\cap\Pi^*(T) = \Pi(T)$. So the right side of (\ref{rockafeller}) will be:
$$\sup \{-\pi(c), \ \pi\in \Pi(T)\}=-\inf\{\pi(c), \ \pi\in \Pi(T)\}.$$
Therefore, we conclude from (\ref{rockafeller}) that:

\[-\,\sup\left\{\alpha :\,\, \begin{array}{l} \exists (\varphi,\psi)\in C(X)\times C(Y), \\
 \alpha + \varphi(x)- \varphi(T_{1}(x,y))+\psi(y) -\psi(T_{2}(x,y))\leq c(x,y) \end{array}
\right\}\]
\[=-\inf\{\pi(c), \ \pi\in \Pi(T)\},\]
or, in another form that
\[\,\sup\left\{\alpha :\,\, \begin{array}{l} \exists (\varphi,\psi)\in C(X)\times C(Y), \\
 \alpha + \varphi(x)- \varphi(T_{1}(x,y))+\psi(y) -\psi(T_{2}(x,y))\leq c(x,y) \end{array}
\right\}\]
\[=\inf\{\pi(c), \ \pi\in \Pi(T)\}.\]

\end{proof}

\begin{proposition} Suppose $c$ is continuous. Denote $\displaystyle{\alpha=\inf_{\pi\in \Pi(T)} \int c(x,y) \, d\pi}$.
If there exist $\varphi \in C(X)$ and $\psi\in C(Y)$ satisfying
\begin{equation}\label{phipsi}
 \alpha +\varphi(x)-\varphi(T_{1}(x)) + \psi(y)-\psi(T_{2}(y)) \leq c(x,y) \ \ \forall (x,y)\in X \times Y,
 \end{equation}
then,
\[ \inf \{ c(x,y)+...+c(T^{n-1}(x,y)) - n \alpha : n\geq 1 \, \text{and}\,  (x,y)\in X\times Y \}>-\infty.\]

\end{proposition}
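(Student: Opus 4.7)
The plan is to iterate the hypothesized inequality along orbits of $T=(T_{1},T_{2})$ and then telescope. For a point $(x,y)\in X\times Y$, write $(x_{k},y_{k}):=T^{k}(x,y)$, so that $x_{k+1}=T_{1}(x_{k},y_{k})$ and $y_{k+1}=T_{2}(x_{k},y_{k})$. Applying (\ref{phipsi}) at the point $(x_{k},y_{k})$ gives, for every $k\geq 0$,
\[ \alpha +\varphi(x_{k})-\varphi(x_{k+1})+\psi(y_{k})-\psi(y_{k+1})\leq c(x_{k},y_{k}). \]

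Summing this inequality from $k=0$ to $k=n-1$, the $\varphi$- and $\psi$-terms collapse into telescoping pairs, leaving only the first and last values:
\[ n\alpha + \varphi(x)-\varphi(x_{n})+\psi(y)-\psi(y_{n}) \leq \sum_{k=0}^{n-1} c(T^{k}(x,y)). \]
Rearranging, I obtain
\[ \sum_{k=0}^{n-1} c(T^{k}(x,y))-n\alpha \;\geq\; \bigl[\varphi(x)-\varphi(x_{n})\bigr]+\bigl[\psi(y)-\psi(y_{n})\bigr]. \]

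Finally, since $X$ and $Y$ are compact and $\varphi\in C(X)$, $\psi\in C(Y)$, both functions attain their bounds; in particular $|\varphi|\leq \|\varphi\|_{\infty}$ on $X$ and $|\psi|\leq \|\psi\|_{\infty}$ on $Y$. Therefore the right hand side is bounded below by $-2\|\varphi\|_{\infty}-2\|\psi\|_{\infty}$, uniformly in $n\geq 1$ and $(x,y)\in X\times Y$. This yields the desired conclusion
\[ \inf \bigl\{ c(x,y)+\cdots+c(T^{n-1}(x,y)) - n\alpha : n\geq 1,\ (x,y)\in X\times Y \bigr\} \geq -2\|\varphi\|_{\infty}-2\|\psi\|_{\infty}>-\infty. \]

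The argument is essentially a coboundary/telescoping observation, so there is no real obstacle; the only place one must be slightly careful is to apply the inequality (\ref{phipsi}) at the iterated points $(x_{k},y_{k})$ rather than at $(x,y)$ itself, and to use compactness of $X$ and $Y$ separately (rather than of $X\times Y$) to bound $\varphi$ and $\psi$ uniformly along the orbit.
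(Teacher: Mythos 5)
Your proof is correct and rests on exactly the same idea as the paper's: telescoping the inequality (\ref{phipsi}) along the orbit of $T$ and bounding the boundary terms $\varphi(x)-\varphi(x_n)+\psi(y)-\psi(y_n)$ via compactness of $X$ and $Y$ and continuity of $\varphi,\psi$. The paper merely phrases this as a proof by contradiction, while your direct version even supplies the explicit lower bound $-2\|\varphi\|_\infty-2\|\psi\|_\infty$.
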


\begin{proof}
Suppose by contradiction that
\[ \inf \{ c(x,y)+...+c(T^{n-1}(x,y)) - n \alpha : n\geq 1 \, \text{and}\,  (x,y)\in X\times Y \}=-\infty.\]
Also suppose that there exists $\varphi$ and $\psi$ satisfying (\ref{phipsi}). Then we have
\[ \inf \{\varphi(x) - \varphi(T_{1}^{n}(x,y)) + \psi(y) - \psi(T_{2}^{n}(x,y)) : n\geq 0 \, \text{and} \, (x,y)\in X\times Y \}= -\infty.\]
This is impossible because $X$ and $Y$ are compact sets and $\varphi$ and $\psi$ are continuous functions.
\end{proof}

\begin{proposition}
 Suppose $X=Y=\{0,1\}^{\mathbb{N}}$, $T_{1}=T_{2}=\sigma$, and, that $c$ is a Lipschitz function, then there exists $\varphi(x)$  and $\psi(y)$ Lipschitz continuous such that
\[\alpha + \varphi(\sigma(x)) - \varphi(x) + \psi(\sigma(y)) - \psi(y) \leq c(x,y).\]
\end{proposition}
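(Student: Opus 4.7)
My plan is to adapt the Lipschitzification argument from Part~II of Theorem~\ref{dualidade}. Let $\beta$ be a common Lipschitz constant of $c$ in each variable. I will first use Theorem~\ref{duas} to obtain continuous near-optimal realizers of the dual inequality, then Lipschitzify them one coordinate at a time via a Bousch-type infimum construction, and finally normalize and pass to the limit using Arzel\`a--Ascoli.

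By Theorem~\ref{duas}, for every $\varepsilon>0$ there exist continuous $\varphi,\psi$ with
$$(\alpha-\varepsilon) + \varphi(\sigma(x))-\varphi(x) + \psi(\sigma(y))-\psi(y) \leq c(x,y) \quad \forall (x,y).$$
Iterating along arbitrary $x_0,\ldots,x_{n-1}\in X$ at positions $\sigma^i(w)$ for any $w$ with $\sigma^n(w)=y$ produces
$$\psi(y)-\psi(w) \leq \sum_{i=0}^{n-1}[c(x_i,\sigma^i(w)) + \varphi(x_i) - \varphi(\sigma(x_i))] - n(\alpha-\varepsilon),$$
which motivates
$$\overline\psi(y) := \inf\Bigl\{\sum_{i=0}^{n-1}[c(x_i,\sigma^i(w)) + \varphi(x_i)-\varphi(\sigma(x_i))] - n(\alpha-\varepsilon): n\geq 0,\,\sigma^n(w)=y,\,x_i\in X\Bigr\}.$$
This lies in $[\psi(y)-\|\psi\|_\infty,\,0]$, so it is finite. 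To see it is Lipschitz with constant $\beta$, I compare $\overline\psi(y)$ and $\overline\psi(y')$ by retaining the same symbols $x_i$ and replacing $w$ by $w'$ with matching length-$n$ prefix and tail $y'$; the $\varphi$-differences cancel identically, leaving only $c$-contributions bounded by $\beta\sum_{i=0}^{n-1}2^{i-n}d(y,y') < \beta\, d(y,y')$. A one-step extension of a near-minimizing string for $y$ with $x_n=x$ appended produces a competitor for $\overline\psi(\sigma(y))$, giving
$$(\alpha-\varepsilon) + \varphi(\sigma(x))-\varphi(x) + \overline\psi(\sigma(y))-\overline\psi(y) \leq c(x,y),$$
so the dual inequality holds with the pair $(\varphi,\overline\psi)$.

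Swapping the roles of the coordinates, with the Lipschitz $\overline\psi$ now fixed, I set
$$\overline\varphi(x) := \inf\Bigl\{\sum_{i=0}^{n-1}[c(\sigma^i(v),y_i) + \overline\psi(y_i)-\overline\psi(\sigma(y_i))] - n(\alpha-\varepsilon): n\geq 0,\,\sigma^n(v)=x,\,y_i\in Y\Bigr\}.$$
By the same reasoning $\overline\varphi$ is Lipschitz with constant $\beta$ and satisfies $(\alpha-\varepsilon) + \overline\varphi(\sigma(x))-\overline\varphi(x) + \overline\psi(\sigma(y))-\overline\psi(y) \leq c(x,y)$. Since the dual inequality is invariant under adding arbitrary constants to $\overline\varphi$ and $\overline\psi$ separately, I normalize $\overline\varphi(x_0)=0=\overline\psi(y_0)$ at fixed base points; together with the uniform Lipschitz constant $\beta$ and the bounded diameter of $\{0,1\}^{\mathbb{N}}$, this gives uniform sup-norm control. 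Taking $\varepsilon_n\to 0$ and applying Arzel\`a--Ascoli produces subsequential uniform limits $\varphi,\psi$ that are Lipschitz and realize the desired inequality with $\alpha$.

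The one delicate point is that the Lipschitz regularity of $\overline\psi$ must not require Lipschitz regularity of the input $\varphi$; this is exactly what the infimum handles, since the discrete-coboundary terms $\varphi(x_i)-\varphi(\sigma(x_i))$ sit inside the infimum and cancel when comparing nearby output values under the same choice of $x_i$. Only the genuinely Lipschitz $c$-terms feed into the modulus of continuity of the output, which is what lets the construction bootstrap from the merely continuous input pairs provided by Theorem~\ref{duas}.
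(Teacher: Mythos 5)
Your proposal is correct and follows essentially the same route as the paper: take near-optimal continuous dual pairs at level $\alpha-\varepsilon$ (the paper uses an increasing sequence $\alpha_n\to\alpha$), Lipschitzify each coordinate in turn via the Bousch-type infimum over backward orbits with the coboundary terms inside the infimum, normalize by constants, and conclude with Arzel\`a--Ascoli. The only differences are cosmetic (order of the two coordinates, and your more explicit verification of the Lipschitz bound via the geometric sum, which the paper leaves as ``easy to see'').
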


\begin{proof} Denote by $\beta$ a Lipschitz constant for $c$.

By definition of $\alpha$ there exist a increasing sequence $ \alpha_{n} \to \alpha $ and continuous functions $\varphi_{n}, \psi_{n}$ such that:
\[\alpha_{n} + \varphi_{n}(\sigma(x)) - \varphi_{n}(x) + \psi_{n}(\sigma(y)) - \psi_{n}(y) \leq c(x,y).\]
From this relation we have that if $\sigma^{m}(z) = x$ and $y_{0},...,y_{m-1}$ belongs to $Y$, then
\[\varphi_{n}(x) - \varphi_{n}(z) \leq \sum_{i=0}^{m-1} c(\sigma^{i}z,y_{i}) +\psi_{n}(y_{i}) - \psi_{n}(\sigma(y_{i})) -\alpha_{n}.\]
Therefore,
\[\inf \{ \sum_{i=0}^{m-1} c(\sigma^{i}z,y_{i}) +\psi_{n}(y_{i}) - \psi_{n}(\sigma(y_{i})) -\alpha_{n}\, : \,m\geq 0, \, \sigma^{m}(z) = x , \, y_{i} \in Y\} > -\infty.\]
Denote by $\overline{\varphi}_{n}(x)$ this infimum. Note that this function satisfies:
\[\alpha_{n} + \overline{\varphi}_{n}(\sigma(x)) - \overline{\varphi}_{n}(x) + \psi_{n}(\sigma(y)) - \psi_{n}(y) \leq c(x,y).\]
It is easy to see that  $\overline{\varphi}_{n}$ is Lipschitz continuous with the same Lipschitz constant $\beta$ of $c$.
Using this last inequality and  the same arguments used before (applied to $\psi$) we can construct a Lipschitz continuous function $\overline{\psi_{n}}$ with the same Lipschitz constant $\beta$ satisfying:
\[\alpha_{n} + \overline{\varphi}_{n}(\sigma(x)) - \overline{\varphi}_{n}(x) + \overline{\psi}_{n}(\sigma(y)) - \overline{\psi}_{n}(y) \leq c(x,y).\]
Note that we can add constants on $\overline{\varphi}_{n}$ and $\overline{\psi}_{n}$ and the conclusions are the same. Then, we get:
there exists Lipschitz continuous functions $\overline{\varphi}_{n}$ and $\overline{\psi}_{n}$ with Lipschitz constant $\beta$, which bounded  bellow and above, respectively, by $0$ and $\beta$, such that:
\[\alpha_{n} + \overline{\varphi}_{n}(\sigma(x)) - \overline{\varphi}_{n}(x) + \overline{\psi}_{n}(\sigma(y)) - \overline{\psi}_{n}(y) \leq c(x,y).\]
Now using Arzela-Ascoli theorem we obtain continuous functions $\varphi$ and $\psi$ satisfying:
\[\alpha + \varphi(\sigma(x)) - \varphi(x) + \psi(\sigma(y)) - \psi(y) \leq c(x,y).\]
Applying the same reasoning of the previous arguments we can also construct Lipschitz functions satisfying this inequality.
\end{proof}

\begin{proposition} Let $C(X,Y)$ be the set of continuous functions from $X\times Y$ to $\mathbb{R_{+}}$ with the uniform norm. The set of functions $c \in C(X,Y)$ with a unique Optimal Plan in $\Pi(T)$ is generic in $C(X,Y)$. The same is true for the Banach space $H(X,Y)$ of the Lipschitz functions with the usual norm.

\end{proposition}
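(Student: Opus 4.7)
The strategy is to imitate the proof of Theorem \ref{teogeneric} almost verbatim, replacing $\Pi(\mu,\sigma)$ by $\Pi(T)$ throughout; the only things to check are that $\Pi(T)$ behaves well enough under weak$^{*}$ limits, and that the geometric lemma (via Corollary \ref{bousch}) still forces small perturbations to collapse the set of optimal plans.

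As in Theorem \ref{teogeneric}, first I would reduce to a maximization problem by replacing $c$ by $-c$ (without changing the topology on $C(X,Y)$ or on $H(X,Y)$). Fix a countable family $(e_i)_{i\in\mathbb{N}}$ dense in $C(X,Y)$ (for the Lipschitz case, I would choose it inside $H(X,Y)$, which is still dense in $C(X,Y)$). The set of $c$ admitting two distinct optimal plans in $\Pi(T)$ equals $\bigcup_{m,n}X_{m,n}$, where
\[
X_{m,n}:=\bigl\{c\in C(X,Y)\,:\,\exists\,\pi,\chi\ \text{optimal plans for } c,\ \int e_n\,d(\pi-\chi)\geq 1/m\bigr\}.
\]
It suffices to show each $X_{m,n}$ is closed with empty interior, and then invoke Baire.

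For closedness I would argue exactly as in Claim 1 of Theorem \ref{teogeneric}: given $c_s\to c$ in $C(X,Y)$ with optimal pairs $(\pi_s,\chi_s)\subset\Pi(T)$ witnessing membership, extract weak$^{*}$ limits $\pi_s\to\pi$, $\chi_s\to\chi$. The crucial (and routine) point is that $\Pi(T)$ is weak$^{*}$-closed: for any $f\in C(X)$, $g\in C(Y)$, the maps $\pi\mapsto\int f(x)-f(T_1(x,y))\,d\pi$ and $\pi\mapsto\int g(y)-g(T_2(x,y))\,d\pi$ are weak$^{*}$-continuous because $T_1,T_2$ are continuous, so the defining identities pass to the limit. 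The inequality $\int e_n\,d(\pi-\chi)\geq 1/m$ is preserved in the limit, and the standard three-epsilon argument (using $\|c-c_s\|_\infty\to 0$) shows $\pi,\chi$ are optimal for $c$.

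For empty interior, fix $c\in X_{m,n}$ and consider the compact set
\[
K=\bigl\{\bigl(\textstyle\int c\,d\pi,\int e_n\,d\pi\bigr):\pi\in\Pi(T)\bigr\}\subset\mathbb{R}^2,
\]
using that $\Pi(T)$ is weak$^{*}$-compact. Applying Corollary \ref{bousch} with $\varepsilon=1/(2m)$, there is $r_0>0$ such that for $0<r<r_0$ any two maximizers of $\int(c+re_n)\,d\pi=\int c\,d\pi+r\int e_n\,d\pi$ differ in the $e_n$-coordinate by less than $1/(2m)$. Hence $c+re_n\notin X_{m,n}$ while $c+re_n\to c$, proving empty interior.

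The Lipschitz case $H(X,Y)$ follows identically: one needs only that $c+re_n\in H(X,Y)$ and $c+re_n\to c$ in the Lipschitz norm (trivial since $e_n\in H(X,Y)$), and that the countable family $(e_n)\subset H(X,Y)$ separates probabilities (which it does, being dense in $C(X,Y)$). I do not expect a genuine obstacle here; the only point that warrants care is the weak$^{*}$-closedness of $\Pi(T)$ under limits, and that is immediate from continuity of $T_1,T_2$.
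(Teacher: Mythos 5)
Your proposal is correct and follows exactly the route the paper intends: the paper's own proof of this proposition is literally the one-line remark that it ``follows from adapting the proof of Theorem \ref{teogeneric}'', and your write-up carries out precisely that adaptation (the $X_{m,n}$ decomposition, weak$^{*}$-closedness of $\Pi(T)$, and Corollary \ref{bousch} for empty interior). No substantive differences to report.
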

\begin{proof}
The result follows from adapting  the proof of Theorem \ref{teogeneric}.
\end{proof}

\section{Zeta-measures for the second class of problems}  \label{zeta2}

In this section we suppose $X=Y=\{0,1\}^{\mathbb{N}}$ and $T_{1}=T_{2}=\sigma$ is the shift.
On this case we have $\Pi(T)=\Pi(\sigma)$ is the set of probabilities $\pi$ in $X\times Y$ such that project on $\sigma$-invariant measures in $X$ and $Y$.

\bigskip

 Bellow we consider the problem of finding a transport plan in $\Pi(\sigma)$ maximizing the integral of a cost $c$ strictly greater than zero. A transport plan maximizing this integral will be called a \textbf{maximizing plan}. By changing the signal of the cost we can get from this the analysis of usual
 minimization problem of Transport Theory.

\begin{definition}
For a fixed cost $c$ we define a probability measure in $X\times Y$ by the linear functional $\zeta_{\beta,n}: C(X\times Y) \to \mathbb{R}$, such that, to each $w \in C(X\times Y)$ we associate the number:
\[ \frac{\sum_{\nu,\mu \in Fix_{n}}e^{\beta.n.\int c(x,y) d\pi(\mu,\nu)}\int w \, d\pi(\mu,\nu)}{\sum_{\mu,\nu \in Fix_{n}}e^{\beta.n.\int c(x,y) d\pi(\mu,\nu)}},\]
where $Fix_{n}$ denote the set of invariant measures in $X=Y$ supported in a periodic orbit of length $n$, and, $\pi(\mu,\nu)$ denote a maximizing plan from $\mu$ to $\nu$ with cost function $c$ (we don't impose other conditions on the plan).
\end{definition}

This zeta-measures can be determined by Linear Algebra arguments. Indeed, note that  if $\mu, \nu \in Fix_{n}$, then the plan  $\pi(\mu,\nu)$ can be determined by the study of certain permutations (see page 5 in \cite{Vi1}).

\begin{theorem}
When $\beta, n$ goes to infinite, any limit measure $\pi_{\infty}$ of convergent subsequence of $\zeta_{\beta,n}$, in the weak* topology, is on $\Pi(\sigma)$. Also, if $c>0$, then, $\pi_{\infty}$ maximizes the integral of $c$ between the measures in $\Pi(\sigma)$.
\end{theorem}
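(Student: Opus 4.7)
The plan is to follow closely the argument for the corresponding theorem in Section~\ref{zeta1}, adapting it to the setting in which both marginals vary simultaneously over periodic orbits. The proof will split into two parts: first, I will show that each $\zeta_{\beta,n}$ already belongs to $\Pi(\sigma)$, so that every weak* limit does too; then I will show that the limit maximizes $\int c\,d\pi$ via an exponential concentration argument on the weights $\beta n\int c\,d\pi(\mu,\nu)$.

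For the first part, I would pick $f\in C(X)$ depending only on $x$. Since $\pi(\mu,\nu)$ has $x$-marginal $\mu$ and $\mu\in Fix_n$ is $\sigma$-invariant, one computes
\[
\zeta_{\beta,n}(f\circ\sigma)=\frac{\sum_{\mu,\nu\in Fix_n} e^{\beta n\int c\,d\pi(\mu,\nu)}\int f\circ\sigma\,d\mu}{\sum_{\mu,\nu\in Fix_n} e^{\beta n\int c\,d\pi(\mu,\nu)}}=\zeta_{\beta,n}(f),
\]
and the analogous identity for $g\in C(Y)$ depending only on $y$ follows via the $y$-marginal $\nu$. Hence $\zeta_{\beta,n}\in\Pi(\sigma)$, and since $\Pi(\sigma)$ is weak* closed, every weak* limit $\pi_\infty$ lies in $\Pi(\sigma)$.

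For the second part, set $I(c)=\sup\{\int c\,d\pi:\pi\in\Pi(\sigma)\}$ and fix a maximizer $\pi^{\ast}\in\Pi(\sigma)$ with $\sigma$-invariant marginals $\mu^{\ast},\nu^{\ast}$. Both marginals can be approximated weak* by periodic measures $\mu_k,\nu_k\in Fix_{n_k}$, $n_k\to\infty$. By the weak* compactness of transport plans with prescribed marginals (\cite{Vi2}, page 77), the maximizing plans $\pi(\mu_k,\nu_k)$ admit a weak* convergent subsequence whose limit $\pi$ has marginals $\mu^{\ast},\nu^{\ast}$ and is still maximizing among plans with these marginals; comparison with $\pi^{\ast}$ forces $\int c\,d\pi=I(c)$. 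Thus, for each $\varepsilon>0$ there exists, for all sufficiently large $n$, some $(\mu,\nu)\in Fix_n\times Fix_n$ with $\int c\,d\pi(\mu,\nu)>I(c)-\varepsilon$.

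The concentration step is the main technical point, and it is precisely here that the double sum complicates the estimate compared with Section~\ref{zeta1}. I would split
\[
A_n(\varepsilon)=\{(\mu,\nu):\, \textstyle\int c\,d\pi(\mu,\nu)\le I(c)-\varepsilon\},\qquad B_n(\varepsilon)=\{(\mu,\nu):\, \textstyle\int c\,d\pi(\mu,\nu)>I(c)-\varepsilon\},
\]
and, using $|Fix_n|\le 2^n$, obtain
\[
\sum_{A_n(\varepsilon)} e^{\beta n\int c\,d\pi(\mu,\nu)}\le e^{2n\log 2+\beta n(I(c)-\varepsilon)},\qquad \sum_{B_n(\varepsilon)} e^{\beta n\int c\,d\pi(\mu,\nu)}\ge e^{\beta n(I(c)-\varepsilon/2)},
\]
the latter from $B_n(\varepsilon/2)\subseteq B_n(\varepsilon)$ being non-empty for large $n$. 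The ratio is dominated by $e^{2n\log 2-\beta n\varepsilon/2}$, which vanishes along any sequence with $\beta_i,n_i\to\infty$. Decomposing the numerator and denominator of $\zeta_{\beta_i,n_i}(c)$ as sums over $A_{n_i}\cup B_{n_i}$ and dividing both by the $B$-sum exactly as in Section~\ref{zeta1} yields $\liminf_i \zeta_{\beta_i,n_i}(c)\ge I(c)-\varepsilon$; weak* convergence gives $\pi_\infty(c)\ge I(c)-\varepsilon$, and letting $\varepsilon\to 0$ combined with $\pi_\infty\in\Pi(\sigma)$ gives equality. The hard part will be managing the doubled combinatorial factor $2^{2n}$ in place of $2^n$, but this is absorbed as soon as $\beta\to\infty$ (implicit in the hypothesis ``$\beta,n\to\infty$''), so the adaptation is conceptually routine.
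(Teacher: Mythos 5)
Your proposal is correct and follows essentially the same route as the paper's proof: testing $\zeta_{\beta,n}$ against functions of $x$ alone and $y$ alone to see it lies in $\Pi(\sigma)$, approximating the marginals of a maximizer by periodic measures and invoking stability of optimal plans, and then the $A_n(\varepsilon)/B_n(\varepsilon)$ exponential concentration estimate with the $e^{2n\log 2}$ combinatorial factor absorbed by $\beta\to\infty$. No substantive differences.
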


\begin{proof}
We begin by proving that for $\beta,n$ fixed, the zeta-measure is in $\Pi(\sigma)$. Let $w$ be a function depending only on y. Then
\[ \zeta_{\beta,n}(w\circ\sigma)=\frac{\sum_{\mu,\nu \in Fix_{n}}e^{\beta.n.\int c(x,y) d\pi(\mu,\nu)}\int w\circ\sigma \, d\pi(\mu,\nu)}{\sum_{\mu,\nu \in Fix_{n}}e^{\beta.n.\int c(x,y) d\pi(\mu,\nu)}}\]
\[= \frac{\sum_{\mu,\nu \in Fix_{n}}e^{\beta.n.\int c(x,y) d\pi(\mu,\nu)}\int w\circ\sigma \, d\nu}{\sum_{\mu,\nu \in Fix_{n}}e^{\beta.n.\int c(x,y) d\pi(\mu,\nu)}}\]
\[=\frac{\sum_{\mu,\nu \in Fix_{n}}e^{\beta.n.\int c(x,y) d\pi(\mu,\nu)}\int w \, d\nu}{\sum_{\mu,\nu \in Fix_{n}}e^{\beta.n.\int c(x,y) d\pi(\mu,\nu)}}\]
\[=\frac{\sum_{\mu,\nu \in Fix_{n}}e^{\beta.n.\int c(x,y) d\pi(\mu,\nu)}\int w \, d\pi(\mu,\nu)}{\sum_{\mu,\nu \in Fix_{n}}e^{\beta.n.\int c(x,y) d\pi(\mu,\nu)}} = \zeta_{\beta,n}(w).\]
If $w$ depends only on $x$ the argument is similar. This shows that $\zeta_{\beta,n} \in \Pi(\sigma)$. Then, when $\beta, n$ goes to infinite, via a convergent subsequence, any limit measure $\pi_{\infty}$ of $\zeta_{\beta,n}$, in the weak* topology, will be  on $\Pi(\sigma)$.

\bigskip
Suppose $\zeta_{\beta_j,n_j}\to \pi_{\infty}$, when $j\to\infty$.

Let $\pi^* \in \Pi(\sigma)$ be a probability maximizing the integral of $c$. Let $\mu^*,\nu^*$, respectively,  the projection of $\pi^*$ in the first and second coordinates. Then, $\mu^*,\nu^*$ are invariant measures. Let $\mu_{n_{j}},\nu_{n_j}\in Fix_{n_j}$ subsequences converging to $\mu^*,\nu^*$ in the weak* topology. If $\pi_{n_j}$ is a maximizing plan from $\mu_{n_j}$ to $\nu_{n_j}$, then, there exist a subsequence $\pi_{n_i}$ converging to a maximizing plan $\pi$ from $\mu^*$ to $\nu^*$ (\cite{Vi2} page 77). Clearly
\[ \int c \, d\pi = \int c d \pi^* ,\]
and, therefore, $\pi$ is maximal. This means that $\pi$  maximizes the integral of $c$ among the measures in $\Pi(\sigma)$. We denote this integral by $I(c) $. We want to show that $\pi_{\infty}(c) \geq I(c)$. We note that the subsequence $\zeta_{\beta_i,n_i}$ converges to $\pi_{\infty}$ in the weak* topology.
From the above arguments we get:
 \bigskip
\begin{center}
\textsl{given $\varepsilon >0$, for sufficiently large $i$ there exists $\mu,\nu \in Fix_{n_i}$, such that,} $$\int c \, d\pi(\mu,\nu) > I(c) - \varepsilon.$$
\end{center}

Consider a fixed $\varepsilon >0$, such that, $(I(c)-\varepsilon)>0$, and, define:
\[A_{n_i}(\varepsilon) =\{ (\mu,\nu) \in Fix_{n_i} : \int c \, d\pi(\mu,\nu) \leq I(c) - \varepsilon\},\]
\[B_{n_i}(\varepsilon) = \{ (\mu,\nu) \in Fix_{n_i} : \int c \, d\pi(\mu,\nu) > I(c) - \varepsilon\}.\]

Then, we  have that
\[\sum_{(\mu,\nu) \in A_{n_i}(\varepsilon)}e^{\beta_i.n_i.\int c(x,y) d\pi(\mu,\nu)} \leq \sum_{(\mu,\nu) \in A_{n_i}(\varepsilon)}e^{\beta_i.n_i.(I(c) -\varepsilon)} \]
\[\leq e^{2n_i\log(2)+\beta_i.n_i.(I(c) -\varepsilon)},\]
and,
\[ \sum_{(\mu,\nu) \in A_{n_i}(\varepsilon)}e^{ \beta_i.n_i.\int c(x,y) d\pi(\mu,\nu)}\int c \, d\pi(\mu,\nu) \leq  e^{2n_i\log(2)+\beta_i.n_i.(I(c) -\varepsilon)}(I(c)-\varepsilon).\]
By other hand, if $n_i$ is sufficiently large, $B_{n_i}(\varepsilon/2)$ is not empty. Moreover,
\[ \sum_{(\mu,\nu) \in B_{n_i}(\varepsilon)}e^{\beta_i.n_i.\int c(x,y) d\pi(\mu,\nu)} \geq \sum_{(\mu,\nu) \in B_{n_i}(\varepsilon/2)}e^{\beta_i.n_i.\int c(x,y) d\pi(\mu,\nu)}\]
\[\geq e^{\beta_i.n_i.(I(c) -\varepsilon/2)},\]
and,
\[\sum_{(\mu,\nu) \in B_{n_i}(\varepsilon)}e^{\beta_i.n_i.\int c(x,y) d\pi(\mu,\nu)}\int c \, d\pi(\mu,\nu) \geq  e^{\beta_i.n_i.(I(c) -\varepsilon/2)}(I(c)-\varepsilon/2).\]
Then,
\[ 0\leq \lim_{i \to\infty} \frac{\sum_{(\mu,\nu) \in A_{n_i}(\varepsilon)}e^{\beta_i.n_i.\int c(x,y) d\pi(\mu,\nu)}}{\sum_{(\mu,\nu) \in B_{n_i}(\varepsilon)}e^{\beta_i.n_i.\int c(x,y) d\pi(\mu,\nu)}} \]
\[\leq \lim_{i \to\infty} \frac{e^{2n_i\log(2)+\beta_i.n_i.(I(c) -\varepsilon)}}{e^{\beta_i.n_i.(I(c) -\varepsilon/2)}} = \lim_{i \to\infty}e^{2n_i\log(2) - \beta_i.n_i.\varepsilon/2} =0,\]
and,
\[ 0 \leq \lim_{i \to\infty} \frac{\sum_{(\mu,\nu) \in A_{n_i}(\varepsilon)}e^{\beta_i.n_i.\int c(x,y) d\pi(\mu,\nu)}\int c \, d\pi(\mu,\nu)}{\sum_{(\mu,\nu) \in B_{n_i}(\varepsilon)}e^{\beta_i.n_i.\int c(x,y) d\pi(\mu,\nu)}\int c \, d\pi(\mu,\nu)}\]
\[ \leq \lim_{i \to\infty} \frac{e^{2n_i\log(2)+\beta_i.n_i.(I(c) -\varepsilon)}(I(c)-\varepsilon)}{e^{\beta_i.n_i.(I(c) -\varepsilon/2)}(I(c)-\varepsilon/2)}\]
\[ = \lim_{i \to\infty}e^{2n_i\log(2) - \beta_i.n_i.\varepsilon/2}\frac{I(c) -\varepsilon}{I(c)-\varepsilon/2} = 0.\]
Therefore,
\[\liminf_{i\to\infty} \frac{\sum_{(\mu,\nu) \in Fix_{n_i}}e^{\beta_i.n_i.\int c(x,y) d\pi(\mu,\nu)}\int c \, d\pi(\mu,\nu)}{\sum_{(\mu,\nu) \in Fix_{n_i}}e^{\beta_i.n_i.\int c(x,y) d\pi(\mu,\nu)}}\]
\[=\liminf_{i\to\infty} \frac{ \sum_{(\mu,\nu) \in B_{n_i}(\varepsilon)}e^{\beta_i.n_i.\int c(x,y) d\pi(\mu,\nu)}\int c \, d\pi(\mu,\nu)}{\sum_{(\mu,\nu) \in B_{n_i}(\varepsilon)}e^{\beta_i.n_i.\int c(x,y) d\pi(\mu,\nu)}} \]
\[\geq \liminf_{i\to\infty} \frac{ \sum_{(\mu,\nu) \in B_{n_i}(\varepsilon)}e^{\beta_i.n_i.\int c(x,y) d\pi(\mu,\nu)}(I(c)-\varepsilon)}{\sum_{(\mu,\nu) \in B_{n_i}(\varepsilon)}e^{\beta_i.n_i.\int c(x,y) d\pi(\mu,\nu)}} \]
\[\geq I(c)-\varepsilon.\]

Taking $\varepsilon \to 0$ we get
\[\liminf_{i\to\infty} \frac{\sum_{(\mu,\nu) \in Fix_{n_i}}e^{\beta_i.n_i.\int c(x,y) d\pi(\mu,\nu)}\int c \, d\pi(\mu,\nu)}{\sum_{(\mu,\nu) \in Fix_{n_i}}e^{\beta_i.n_i.\int c(x,y) d\pi(\mu,\nu)}} \geq I(c).\]
Then, using the fact that $\zeta_{\beta_i,n_i}$ converges to $\pi_{\infty}$, we finally get $\pi_{\infty}(c) \geq I(c)$.
\end{proof}

\bigskip

\end{document}